\newtheorem{theorem}{Theorem}
\newtheorem{lemma}[theorem]{Lemma}
\newtheorem{definition}[theorem]{Definition}
\newtheorem{rem}[theorem]{Remark}
\newtheorem{observation}[theorem]{Observation}
\newtheorem{strategy}{Strategy}
\def\N{\mathbb{N}}
\def\R{\mathbb{R}}
\newcommand{\mak}{Maker}
\newcommand{\bre}{Breaker}
\newcommand{\MBG}{\mak-\bre\ game}
\newcommand{\q}{q}
\newcommand{\pot}{\textnormal{pot}}
\newcommand{\Pot}{\textnormal{POT}}
\newcommand{\epsa}{\gamma}
\newcommand{\epsb}{\epsilon}
\newcommand{\epsc}{\xi}
\newcommand{\epsd}{\delta}
\newcommand{\firsteps}{\epsilon^*}
\newcommand{\lam}{6\beta}
\newcommand{\numcrit}{c}
\newcommand{\crit}{\textnormal{crit}}
\newcommand{\diff}[1]{\Delta_{#1}}
\newcommand{\diffrest}[1]{r_{#1}}
\newcommand{\bigrest}{R}
\newcommand{\free}{f}
\newcommand{\bal}{\textnormal{bal}}
\renewcommand{\deg}{\textnormal{deg}}
\newcommand{\baldeg}{{\deg^*}}
\newcommand{\defi}{d}
\newcommand{\inc}[1]{I_{#1}}
\newcommand{\dec}[1]{D_{#1}}
\newcommand{\decfree}[1]{\dec{#1}^{\textnormal{free}}}
\newcommand{\decheads}[1]{\dec{#1}^{\textnormal{heads}}}
\newcommand{\dectails}[1]{\dec{#1}^{\textnormal{tails}}}
\newcommand{\ott}[1]{\dec{#1}^{\textnormal{+}}}
\newcommand{\utt}[1]{\dec{#1}^{\textnormal{--}}}
\newcommand{\deczero}[1]{\dec{#1}^{0}}
\newcommand{\critdiff}[1]{\Delta_{#1}}
\newcommand{\restdiff}[1]{r_{#1}}
\newcommand{\first}[1]{{#1}^{(1)}}
\newcommand{\tmin}{{t^*}}
\newcommand{\ceiling}[1]{\left\lceil{#1}\right\rceil}
\title{A new Bound for the Maker-Breaker Triangle Game}
\author[1]{Christian Glazik}
\author[1]{Anand Srivastav}
\affil[1]{Department of Mathematics, Kiel University\\
  Christian-Albrechts-Platz 4\\
24118 Kiel, Germany;  \{glazik,srivastav\}@math.uni-kiel.de}
\date{}
\begin{document} 
 
\maketitle
 
\begin{abstract}
The triangle game introduced by Chv\'{a}tal and Erd\H{o}s (1978)
is one of the most famous combi\-natorial games.
For $n,\q\in\N$, the $(n,\q)$-triangle game is played
by two players, called Maker and Breaker,
on the complete graph $K_n$. 
Alternately Maker claims one edge and thereafter Breaker claims $\q$ edges of the graph.
Maker wins the game if he can claim all three edges of a triangle,
otherwise Breaker wins.
Chv\'{a}tal and Erd\H{o}s (1978)
proved that for $\q<\sqrt{2n+2}-5/2\approx 1.414\sqrt{n}$
Maker has a winning strategy,
and for $\q\geq 2\sqrt{n}$ Breaker has a winning strategy.
Since then, the problem of finding the exact leading constant
for the threshold bias of the triangle game has been one of the
famous open problems in combinatorial game theory.
In fact, the constant is not known for any graph with a cycle and we
do not even know
if such a constant exists.
 Balogh and Samotij (2011) slightly improved the
Chv\'{a}tal-Erd\H{o}s constant
  for Breaker's winning strategy from $2$ to
  $1.935$ with a randomized approach. Since then no progress was
made.
  In this work, we present a new deterministic strategy for
Breaker's win whenever $n$ is sufficiently large and
$\q\geq\sqrt{(8/3+o(1))n}\approx 1.633\sqrt{n}$,
significantly reducing the gap towards the lower bound.
In previous strategies Breaker chooses his edges
such that one node is part of the last edge chosen by Maker,
whereas the remaining node is chosen more or less arbitrarily.
In contrast, we introduce a suitable potential function on the set of nodes.
This allows Breaker to pick edges that connect the most `dangerous' nodes.
The total potential of the game may still increase, 
even for several turns, but finally Breaker's strategy
prevents the total potential of the game from exceeding
a critical level and leads to Breaker's win.
\end{abstract}
\pagebreak
\section{Introduction}
For $n,\q\in\N$ the $(n,\q)$-triangle game
is played on the complete graph $K_n$.
In every turn Maker claims an unclaimed edge from $K_n$,
followed by Breaker claiming $\q$ edges.
The game ends when all edges are claimed either by Maker or Breaker.
If Maker manages to build a triangle, he wins the game,
otherwise Breaker wins.
This game is one of the most prominent examples
of \emph{\MBG s},
in which Maker tries to build a certain structure
while Breaker tries to prevent this.
These are games of perfect information without chance moves,
so either Maker or Breaker has a winning strategy,
in which case we say the game is Maker's win or Breaker's win, respectively.
For more information on \MBG s we refer to
the paper by Krivelevich~\cite{Krivelevich14}.
\begin{figure}[h]
  \centering
  \fbox{
    \begin{minipage}{.217\textwidth}
      \centering
      \includegraphics[scale=0.5]
      {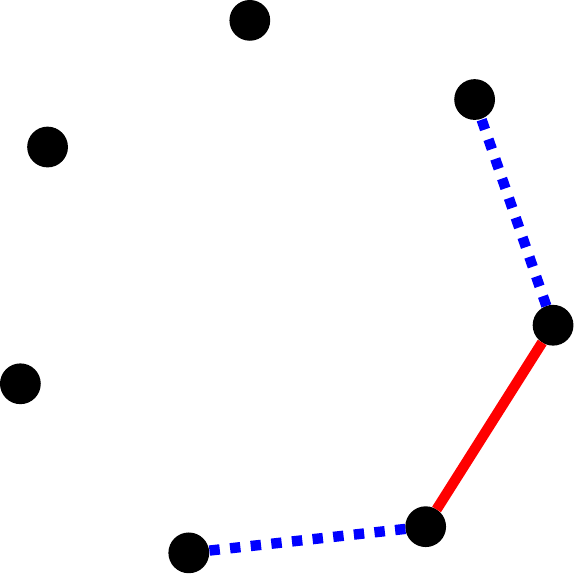}
      
    \end{minipage}}
  \fbox{
    \begin{minipage}{.217\textwidth}
      \centering
      \includegraphics[scale=0.5]
      {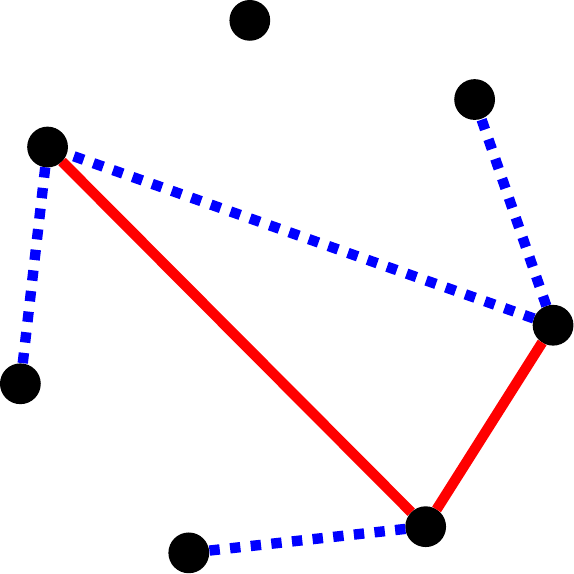}
    \end{minipage}}
  \fbox{
    \begin{minipage}{.217\textwidth}
      \centering
      \includegraphics[scale=0.5]
      {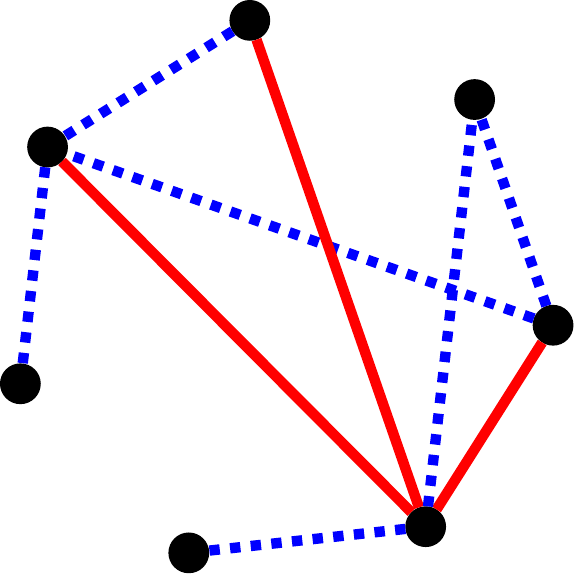}
    \end{minipage}}
  \fbox{
    \begin{minipage}{.217\textwidth}
      \centering
      \includegraphics[scale=0.5]
      {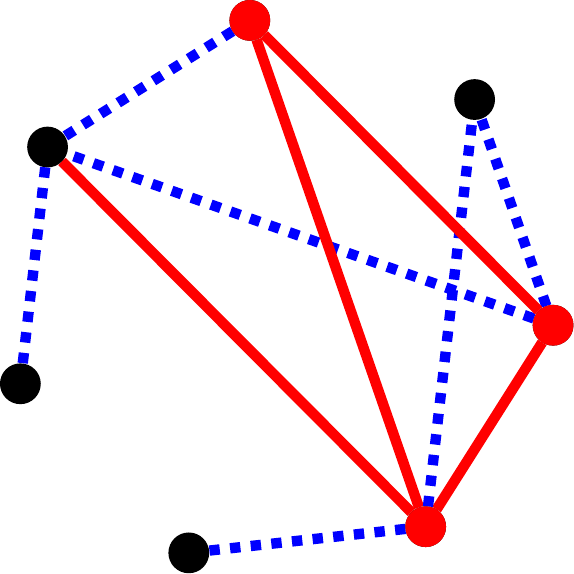}
    \end{minipage}}
  \caption{The $(7,2)$-triangle game is a \mak's win. \mak-edges are red, \bre-edges blue.}
  \label{fig:degree}
\end{figure}
\subsection{Previous work}
\MBG s have been extensively studied by Beck~\cite{Beck81,Beck82,Beck85},
concerning, e.g., games in which Maker tries to build
a Hamiltonian cycle, a spanning tree or a big star.
Beck~\cite{Beck82} also presented very general sufficient conditions for Maker's win
and Breaker's win.
In his work he generalized the Erd\H os-Selfridge Theorem~\cite{Erdoes73},
which gives a winning criterion for Breaker for the case $\q=1$.
A direct application of these criteria to specific games as the triangle game
often does not lead to strong results.
However, it turned out to be a powerful tool, e.g.\
Bednarska and \L uczak~\cite{Bednarska00} used it to prove the following fundamental result:
For a fixed graph $G$ consider the $(G,n,\q)$-game,
in which Maker has to build a copy of $G$. There exist constants $c_0,C_0$
such that the game is a Maker's win for $\q\leq c_0n^{1/m(G)}$
and a Breaker's win for $\q\geq C_0n^{1/m(G)}$,
where $m(G):=\max\Big\{\frac{e(H)-1}{v(H)-1}:H\subseteq G,v(H)\geq 3\Big\}$.
This result recently was further generalized for hypergraphs by
Kusch et al.~\cite{Kusch17}.
Bednarska and \L uczak also conjectured that $c_0$ and $C_0$
can be chosen arbitrarily close to each other.
Until today this conjecture couldn't be proved or disproved for any 
fixed graph $G$ that contains a cycle.\par
The special case of the triangle game was proposed by
Chv\' atal and Erd\H os~\cite{Chvatal78},
who presented a winning strategy for Maker
if $\q<\sqrt{2n+2}-5/2$,
and a winning strategy for Breaker
if $\q\geq 2\sqrt{n}$.
Both strategies are rather simple:
If $\q<\sqrt{2n+2}-5/2$, Maker can win by
fixing a node $v$ and then simply claiming all his edges incident to $v$.
At some point of time Breaker will not be able
to close all Maker paths of length~$2$, so Maker can complete such a path
to build a triangle.
If $\q\geq 2\sqrt{n}$, Breaker can always close all paths of length~$2$
created by Maker and at the same time
prevent Maker from building a star of size $\q/2$.
To achieve this he first closes all new Maker paths of length~$2$
and then claims arbitrary edges
such that at the end of the turn he claimed exactly
$\sqrt{n}$ edges incident in $u$
and the remaining $\sqrt{n}$ edges incident in $v$,
where $\{u,v\}$ is the edge recently claimed by Maker.\par
Chv\' atal and Erd\H os also asked for the \emph{threshold bias}
of this game, i.e., the value $\q_0(n)$ such that the game is Maker's win
for $\q\leq \q_0(n)$ and Breaker's win for $\q>\q_0(n)$.
For the triangle game, the asymptotic order of $\Theta(\sqrt{n})$
follows directly from the two strategies given above
and also occurs as special case of the result by Bednarska and \L uczak,
but the gap between $\sqrt{2n}$ and $2\sqrt{n}$ could not be narrowed
for many years.
In 2011, Balogh and Samotij~\cite{Balogh11} presented a randomized Breaker strategy,
improving the lower bound for Breaker's win 
to about $(2-1/24)\sqrt{n}\approx 1.958\sqrt{n}$.
\subsection{Our Contribution}
In our work we present a new deterministic strategy for Breaker
that further improves the recent lower bound for Breaker's win
to $\q=\sqrt{(8/3+o(1))n}\approx 1.633\sqrt{n}$,
assuming $n$ to be sufficiently large.
The global idea of our strategy is as follows.
Instead of claiming arbitrary edges incident
in the nodes of the last edge claimed by Maker,
as done in the strategy of Chv\' atal and Erd\H os,
Breaker claims only edges that connect the `most dangerous' nodes,
i.e., nodes that already have many incident Maker edges
and rather few Breaker edges.
Proceeding this way, Breaker needs fewer edges
to prevent Maker from building a $\q/2$-star. 
For the realization of this idea
we use an (efficiently computable) 
\emph{potential function} to decide which edges are
most dangerous and should be claimed next to prevent Maker
from building any triangle \emph{or} big star.
In contrast to Beck~\cite{Beck82}, instead of assigning a potential
to every winning set, our potential function is defined directly on the set of nodes.
However, the most significantly difference to Beck and other previous potential-based 
approaches is that our potential function 
is not necessarily decreasing in every single turn.
Some \emph{critical} turns may occur in which the potential increases, 
so the challenge is to bound the number and the impact of these critical rounds.
This new approach requires plenty of analytic work
but turns out to be a more powerful technique than classic potential-based approaches
and also might be of interest for other kinds of \MBG s. 
\section{\bre's strategy}
We start by introducing the potential function
which forms the basis for \bre's strategy.
During the game, denote by $M$ the \mak\ graph
consisting of all edges claimed by \mak\ so far
and let $B$ denote the corresponding \bre\ graph.
For $v\in V$ and $H\in\{M,B\}$ let $\deg_H(v)$ denote
the degree of $v$ in $H$.
For a turn $t$, $\deg_{H,t}(v)$ denotes the degree
of $v$ in $H$ directly after turn $t$.
\subsection{The potential function}\label{sec:potfunc}
Let $\firsteps>0$ and $\beta=\frac{8}{3}+\firsteps$.
In this chapter we consider the $(n,\q)$-Triangle game
with $\q=\sqrt{\beta n}$.
As mentioned in the introduction,
for $\beta\geq 4$ there exist known winning strategies
for \mak, so we will assume $\beta\leq 4$ if necessary.
Fix $\epsd\in(0,1-\frac{8}{3\beta})$.
\begin{definition}\label{def:pot}
  For every $v\in V$ define the \emph{balance} of $v$ as
  \[\bal(v):=\frac{8(n-\deg_B(v))}
    {\q^2(1-\epsd)(3+\epsd)-4\deg_M(v)(2\q-\deg_M(v))}.\]
  Moreover we define $p_0$ as the balance of a node in the very beginning of the game,
  i.e.\
  \[p_0:=\frac{8n}{\q^2(1-\epsd)(3+\epsd)}=\frac{8}{\beta(1-\epsd)(3+\epsd)}.\]
\end{definition}
The balance of a node is a measure of the ratio of \mak- and \bre-edges
incident in this node: The more \mak-edges and the fewer \bre-edges incide
in $v$, the bigger the balance value gets.
A detailed interpretation of the balance value can be found in Section~\ref{sec:interpretation}.
\par
For the success of \bre's strategy it is crucial that $p_0<1$
(e.g.\ for the choice of $\eta$ in Section~\ref{sec:crit_turns}).
This is assured by the next remark.
\begin{rem}\label{rem:p0}
  It holds $\frac{8}{3\beta}<p_0<\frac{8}{3\beta(1-\epsd)}<1$.
\end{rem}
\begin{proof}
  The second and third inequality follow directly
  from $\epsd\in(0,1-\frac{8}{3\beta})$.
  For the first inequality,
  note that $(1-\epsd)(3+\epsd)=3-2\epsd-\epsd^2<3$,
  so we get $p_0=\frac{8}{\beta(1-\epsd)(3+\epsd)}>\frac{8}{3\beta}$.
\end{proof}
During the game, \bre\ will not be able
to keep all nodes at their start balance.
Some nodes will get more \bre-edges than needed, others less.
This \emph{deficit} of a node will be used to define its potential.
\begin{definition}\label{def:pot2}
  Consider the game at an arbitrary point of time.
  For a node $v\in V$ let \mbox{$\baldeg(v)\in\R$}
  be the \emph{balanced \bre-degree} of this node,
  i.e.\ the \bre-degree that would be necessary, so that $\bal(v)=p_0$.
  Formally we define
  \[\baldeg(v):=
    n-p_0\left(\frac{\q^2(1-\epsd)(3+\epsd)}{8}
      -\deg_M(v)\left(\q-\frac{\deg_M(v)}{2}\right)\right).
  \]
  The \emph{deficit} of $v$ is defined by
  \[\defi(v):=\baldeg(v)-\deg_B(v).\]
  Finally, let $\mu:=1+\frac{6\beta\ln(n)}{\epsd \q}$.
  Define the \emph{potential} of $v$ as
  \[\pot(v):=\begin{cases}0&\textnormal{if }\deg_M(v)+\deg_B(v)=n-1\\
      \mu^{\defi(v)/\q}&\textnormal{else}\end{cases}\]
  and for an unclaimed edge $e=\{u,w\}$ define the \emph{potential} of $e$
  as
  $\pot(e):=\pot(u)+\pot(w)$.
  For every turn $t$ we define $\pot_t(v)$ ($\pot_t(e)$, resp.)
  as the potential of $v$ ($e$, resp.)
  directly after turn $t$ and $\pot_0(v)$ as the potential of
  $v$ at the beginning of the game.
  Analogously we define 
  $\baldeg_t(v)$ and $\defi_t(v)$.
  The \emph{total potential} of a turn $t$ is defined as
  $\Pot_t:=\sum_{v\in V}\pot_t(v)$.
  The \emph{total starting potential} is defined as
  $\Pot_0:=\sum_{v\in V}\pot_0(v)$.
\end{definition}
\begin{lemma}\label{lem:start_potential}
  The total starting potential fulfills $\Pot_0=n$.
\end{lemma}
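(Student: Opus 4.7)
The plan is to observe that at the start of the game no edges have been claimed, so the degrees $\deg_M(v)$ and $\deg_B(v)$ are both zero for every $v\in V$, and then simply substitute these values into the definitions of $\baldeg$, $\defi$, and $\pot$.

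First I would compute $\baldeg_0(v)$. With $\deg_M(v)=0$ the formula in Definition~\ref{def:pot2} collapses to
\[
\baldeg_0(v)=n-p_0\cdot\frac{\q^2(1-\epsd)(3+\epsd)}{8},
\]
and by the definition of $p_0$ in Definition~\ref{def:pot} the second term is exactly $n$, so $\baldeg_0(v)=0$. Since also $\deg_{B,0}(v)=0$, the deficit satisfies $\defi_0(v)=\baldeg_0(v)-\deg_{B,0}(v)=0$.

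Next I would argue that we fall into the second branch of the piecewise definition of $\pot$: at the start of the game $\deg_M(v)+\deg_B(v)=0$, which is not equal to $n-1$ (for any reasonable $n\geq 2$, which is implicit throughout the paper). Hence $\pot_0(v)=\mu^{\defi_0(v)/\q}=\mu^{0}=1$ for every vertex $v$. Summing over all $n$ vertices yields $\Pot_0=\sum_{v\in V}\pot_0(v)=n$, as claimed.

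There is no real obstacle here; the statement is essentially a sanity check that confirms $p_0$ was calibrated precisely so that the starting balance of every node equals $p_0$, which in turn forces $\defi_0\equiv 0$ and thus normalizes the total potential to $n$. This normalization is what will later allow the analysis to compare the evolving total potential $\Pot_t$ against a critical level expressed as a multiple of $n$.
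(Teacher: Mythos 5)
Your proof is correct and follows essentially the same route as the paper's: compute $\baldeg_0(v)=0$ directly from the definition of $p_0$, conclude $\defi_0(v)=0$ and $\pot_0(v)=\mu^0=1$, and sum. You are slightly more careful than the paper in explicitly verifying that the second branch of the piecewise definition of $\pot$ applies (i.e.\ $\deg_M(v)+\deg_B(v)=0\neq n-1$), which the paper leaves implicit.
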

\begin{proof}
  Let $v\in V$ with $\deg_M(v)=\deg_B(v)=0$.
  Then,
  \begin{align*}
    \baldeg(v)
    =n-p_0\left(\frac{\q^2(1-\epsd)(3+\epsd)}{8}\right)
    =n-p_0\cdot n\cdot p_0^{-1}=0.
  \end{align*}
  This implies $\pot(v)=\mu^{\defi(v)/\q}=\mu^{(\baldeg(v)-\deg_B(v))/\q}=\mu^0=1$,
  so \[\Pot_0=\sum_{v\in V}\pot_0(v)=\sum_{v\in V}1=n.\]
\end{proof}
\bre's aim is to keep the total potential as low as possible.
The next lemma ensures that if \bre\ can keep the potential of every single node
below $2n$, he can prevent \mak\ from raising the \mak-degree
of a node above $\q/2$.
We will later show (Theorem~\ref{thm:smallpot}) that \bre\ is even able
to keep the \emph{total} potential of the game below $2n$. 
\begin{lemma}\label{lem:potentialvsdegree}
  If $n$ is sufficiently big, for every turn $t$ and every node $v\in V$
  the following holds:
  \[0<\pot_t(v)\leq 2n\Rightarrow \deg_{M,t}(v)\neq\left\lceil \q/2\right\rceil-1.\]
\end{lemma}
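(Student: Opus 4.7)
The plan is to prove the contrapositive: assume $\deg_{M,t}(v) = \lceil \q/2\rceil - 1$ and $\pot_t(v) > 0$, and show $\pot_t(v) > 2n$ for $n$ sufficiently large. From $\pot_t(v) > 0$ the definition forces $\deg_M(v) + \deg_B(v) \leq n-2$ and $\pot_t(v) = \mu^{\defi_t(v)/\q}$, so it suffices to bound $\defi_t(v) = \baldeg(v) - \deg_B(v)$ from below and then exponentiate.

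The crucial observation is that the defining identity $p_0\cdot\frac{\q^2(1-\epsd)(3+\epsd)}{8} = n$ from Definition~\ref{def:pot} collapses the formula in Definition~\ref{def:pot2} to $\baldeg(v) = p_0\cdot\deg_M(v)(\q - \deg_M(v)/2)$. Substituting $\deg_M(v) = \lceil \q/2\rceil - 1 = \q/2 + O(1)$ gives $\deg_M(v)(\q - \deg_M(v)/2) = 3\q^2/8 - O(\q)$, and combined with $p_0 = \frac{8}{\beta(1-\epsd)(3+\epsd)}$ and $\q^2 = \beta n$ this yields $\baldeg(v) \geq \frac{3n}{(1-\epsd)(3+\epsd)} - O(\q)$. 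Since $\deg_B(v) \leq n - 1 - \lceil \q/2\rceil$, subtracting gives
\[\defi_t(v) \;\geq\; \eta\, n - O(\q),\]
where $\eta := \frac{3}{(1-\epsd)(3+\epsd)} - 1 = \frac{\epsd(2+\epsd)}{(1-\epsd)(3+\epsd)} > 0$.

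The final step is to exponentiate. Since $\q = \sqrt{\beta n}$, we have $\mu - 1 = \frac{6\beta\ln n}{\epsd \q} = o(1)$, so the Taylor estimate $\ln(1+x) \geq x(1 - x/2)$ gives $\ln \mu \geq (1-o(1))(\mu - 1)$ for large $n$. Combining and using $n/\q^2 = 1/\beta$,
\[\ln \pot_t(v) \;=\; \frac{\defi_t(v)}{\q}\,\ln \mu \;\geq\; (1-o(1))\cdot\frac{\eta n}{\q}\cdot\frac{6\beta\ln n}{\epsd \q} \;=\; (1-o(1))\cdot\frac{6(2+\epsd)}{(1-\epsd)(3+\epsd)}\,\ln n.\]
The inequality $\frac{6(2+\epsd)}{(1-\epsd)(3+\epsd)} > 1$ reduces to $9 + 8\epsd + \epsd^2 > 0$, which always holds; in fact the exponent tends to $4$ as $\epsd\to 0$. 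Hence $\pot_t(v) \geq n^{1+c}$ for some absolute $c = c(\epsd) > 0$, which comfortably exceeds $2n$ once $n$ is large enough.

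The main obstacle is just the bookkeeping of the vanishing $O(\q)/n$ correction in $\defi_t(v)$ and the second-order term in the expansion of $\ln\mu$; both are absorbed because the target exponent $\frac{6(2+\epsd)}{(1-\epsd)(3+\epsd)}$ is bounded strictly above $1$ uniformly in the admissible range of $\epsd$. The parity of $\q$ in $\lceil \q/2\rceil - 1$ affects $\deg_M(v)$ by at most $1$ and hence the quadratic expression by at most $O(\q)$, which is swallowed in the same way.
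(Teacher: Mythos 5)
Your proof is correct and follows essentially the same route as the paper's: pass to the contrapositive, bound the deficit $\defi_t(v)$ from below by a constant multiple of $n$, and exponentiate. The only cosmetic differences are that you evaluate $p_0$ exactly (giving the slightly sharper constant $\frac{\epsd(2+\epsd)}{(1-\epsd)(3+\epsd)}$ where the paper uses the bound $p_0>\frac{8}{3\beta}$ to land on $\frac{2\epsd}{3}$) and you exponentiate via $\ln(1+x)\geq x(1-x/2)$ where the paper uses an $(1+1/y)^y\geq e^{\cdot}$ manipulation; note also that your local $\eta$ clashes with the paper's reserved use of $\eta$ in Section~3.4, so it should be renamed.
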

\begin{proof}
  Let $t$ be a turn and $v\in V$ with $\pot_t(v)>0$ and $\deg_{M,t}(v)=\left\lceil \q/2\right\rceil-1$.
  We show that $\pot_t(v)>2n$.
  Because $\pot_t(v)\neq 0$, we have $\pot_t(v)=\mu^{\defi_t(v)/\q}$.
  We claim (and later prove) that
  \begin{equation}\label{claim}
    \defi_t(v)\geq\frac{2\epsd n}{3}.
  \end{equation}
  This implies
  \begin{align*}
    \pot_t(v)
    &=\mu^{\defi_t(v)/\q}
      \geq \mu^{2\epsd n/3\q}
      =\left(1+\frac{\lam \ln(n)}{\epsd \q}\right)^{2\epsd n/3\q}\\
    &=\left(1+\frac{\lam \ln(n)}{\epsd \q}\right)
      ^{\left(\frac{\epsd \q}{\lam \ln(n)}+1\right)\left(\frac{\epsd \q}{\lam \ln(n)}+1\right)^{-1}\frac{2\epsd n}{3\q}}
      \geq e^{\alpha},
  \end{align*}
  where 
  \begin{align*}
    \alpha
    &=\left(\frac{\epsd \q}{\lam\ln(n)}+1\right)^{-1}\frac{2\epsd n}{3\q}
      =\left(\frac{\epsd \q\mu}{\lam\ln(n)}\right)^{-1}\frac{2\epsd n}{3\q}
      =\frac{4\beta n\ln(n)}{\q^2\mu}
      >2\ln(n),
  \end{align*}
  where for the last inequality we used that $\mu<2$ if $n$ is big enough.
  Finally we get $\pot_t(v)\geq e^\alpha>n^2$
  and for $n\geq 2$ this is at least $2n$.
  \par
  We still have to prove claim~\eqref{claim}.
  Recall that $\defi_t(v)=\baldeg_t(v)-\deg_{B,t}(v)$.
  We estimate $\baldeg_t(v)$ as
  \begin{align*}
    \baldeg_t(v)
    &=n-p_0\left(\frac{\q^2(1-\epsd)(3+\epsd)}{8}
      -\deg_{M,t}(v)\left(\q-\frac{\deg_{M,t}(v)}{2}\right)\right)\\
    &\geq n-p_0\left(\frac{\q^2(1-\epsd)(3+\epsd)}{8}
      -\left(\frac{\q}{2}-1\right)\left(\q-\frac{\q}{4}\right)\right)\\
    &=n+p_0\left(\q^2\left(\frac{3-(1-\epsd)(3+\epsd)}{8}\right)
      -\frac{3\q}{4}\right)\\
    &= n+p_0\bigg(\frac{\q^2\epsd}{4}+\q\bigg(\underbrace{\frac{\epsd^2\q}{8}-\frac{3}{4}}_{\geq 0 \textnormal{ if } n \textnormal{ suff. big}}\bigg)\bigg)\\
    &\geq n+\frac{p_0\beta \epsd n}{4}
      \geq n+\frac{2\epsd n}{3}. \tag{Remark~\ref{rem:p0}} 
  \end{align*}
  Therefore,
  \begin{align*}
    \defi_t(v)
    =\baldeg_t(v)-\deg_{B,t}(v)
    \geq  n+\frac{2\epsd n}{3}-n
    = \frac{2\epsd n}{3}.
  \end{align*}

\end{proof}

\subsection{The detailed strategy}\label{sec:strategy}
The basic idea is that \mak's task to build a triangle
is closely related to the task of connecting big stars.
Assume that at any time during the game
\mak\ manages to build a path $(u,v,w)$ of length $2$.
Then \bre\ is forced to immediately \emph{close} this path
by claiming the edge $\{u,w\}$
if he doesn't already own this edge.
So every sensible \bre-strategy will follow the simple rule
of immediately closing all \mak-paths of length~$2$.
Hence, the only chance for \mak\ to win the game
is to construct more than $\q$ paths of length~$2$ in a single turn,
so that \bre\ can't claim enough edges
to close all of them immediately.
By claiming an edge $\{u,v\}$,
\mak\ is building $\deg_M(u)+\deg_M(v)$ new paths of length~$2$.
This implies that if \bre\ at each turn closes all \mak-paths of length~$2$
and simultaneously manages to prevent \mak\ from building a $\q/2$-star,
he will win the game.
\begin{strategy}\label{strat:ours}
  Consider an arbitrary turn $t$.
  Let $e_M=\{u,v\}$ be the edge claimed by \mak\ in this turn.
  \bre's moves for this turn are split into two parts.
  \par
  \textbf{Part 1: closing paths.}
  \bre\ claims $\deg_{M,t-1}(v)$ edges incident in $u$
  and $\deg_{M,t-1}(u)$ edges incident in $v$ to close all new \mak-paths
  of length~$2$.
  If such a path is already closed,
  he claims an arbitrary edge incident in $u$ ($v$, resp.) instead.
  If all edges incident in $u$ ($v$, resp.) are already claimed,
  we call the turn $t$ an \emph{isolation turn}.
  In this case, \bre\ claims arbitrary unclaimed edges instead.
  We call the edges claimed during Part~1 \emph{closing edges}.
  $u$ ($v$, resp.) is called the \emph{head} of the closing edge,
  whereas the corresponding second node of the edge is called its \emph{tail}.
  \par
  \textbf{Part 2: free edges.}
  If after part 1 \bre\ still has edges left to claim
  (we will later show that this is always the case),
  he iteratively claims an edge $e$ with
  $\pot(e)\geq\pot(e')$ for all unclaimed edges $e'$,
  until he claimed all of his $\q$ edges.
  We call the edges claimed in Part~2 \emph{free edges}.
  The number of free edges claimed in turn $t$ is denoted by $\free(t)$.
  Note that 
  \begin{equation}\label{eq:freeedges}
   \free(t)=\q-\deg_{M,t-1}(u)-\deg_{M,t-1}(v).
  \end{equation}

\end{strategy}
Part~1 of the strategy is more or less obligatory,
because a \mak-path of length~$2$ that is not closed by \bre\
can be completed to a triangle in the next turn.
Part~2 is more interesting.
Our aim in the following sections is to prove Theorem~\ref{thm:final},
where we show that part~2 of the strategy
prevents \mak\ from building a $\q/2$-star,
so that \bre\ wins the game.\par
%
\begin{observation}\label{obs:isolation}
 We can assume that the game contains no isolation turns.
\end{observation}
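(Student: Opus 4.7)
The plan is a standard WLOG reduction: I would show that isolation turns can be treated as non-isolation turns on a shrinking set of ``active'' nodes without disturbing the potential-based analysis. The key leverage is that the isolated node drops out of both the potential function (by fiat, from Definition~\ref{def:pot2}) and the set of nodes that can still contribute to any future Maker path of length~$2$ or Maker triangle.

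More concretely, suppose turn $t$ is an isolation turn, with isolated node $u$ (the case of $v$ is symmetric). Then all $n-1$ edges incident to $u$ are claimed after turn~$t$, so $\deg_{M,s}(u)+\deg_{B,s}(u)=n-1$ for every $s\geq t$, and hence $\pot_s(u)=0$ for all $s\geq t$. Moreover, no unclaimed edge at $u$ remains, so no new Maker edge, no new Maker path of length~$2$ through $u$, and no new Maker triangle involving $u$ can ever appear. I would then argue that Breaker's substitution in Part~1 of turn $t$ (claiming arbitrary unclaimed edges in place of the infeasible closing edges at $u$) is harmless: the Part~1 requirement at $u$ has become permanently vacuous, and the substituted edges can instead be viewed as additional Part~2 free-edge claims. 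Iterating this over every isolation turn, the effective game proceeds on a monotonically shrinking active set on which no isolation turn occurs, and Breaker's strategy is precisely Strategy~\ref{strat:ours} applied to that active set.

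Since $\pot_s(u)=0$ for every deactivated $u$, the total potential over $V$ coincides with the total potential over the active set, so any bound of the form $\Pot_s<2n$ established for the reduced game transfers verbatim to the original one. Combined with Lemma~\ref{lem:potentialvsdegree}, this means a proof of Breaker's win in the no-isolation case already yields Breaker's win in general, so we may assume throughout that no isolation turn occurs.

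The main obstacle I expect is verifying the consistency of the substitution with the later potential-increase analysis (Theorem~\ref{thm:smallpot} and the surrounding bounds): one must check that the replacement of Part~1 closing edges at $u$ by arbitrary unclaimed or additional Part~2 edges preserves the key invariants used to control the potential growth, and that the Part~2 maximal-potential selection is unaffected when restricted to the active set (which holds because deactivated endpoints contribute $0$ to edge potentials). None of this should require new ideas, just careful bookkeeping.
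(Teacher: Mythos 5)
Your proposal takes a genuinely different route from the paper's. The paper's proof is a short game-theoretic dominance argument: after an isolation turn the Maker edge $e_M$ can no longer lie in any still-winnable triangle, so the turn is equivalent to Maker passing while Breaker collects $\q+1$ edges; a perfectly playing Maker therefore avoids isolation turns, and if he cannot, his remaining moves are useless and he loses regardless. The paper never tries to push an isolation turn through the potential analysis. You instead propose to absorb isolation turns \emph{into} the analysis: the isolated node's potential is pinned to zero (so it drops out of $\Pot_t$), it can contribute no future paths or triangles, and the arbitrary substitute edges of Part~1 get re-labelled as Part~2 free-edge claims on a shrinking active vertex set.

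The re-labelling step is where the proposal has a real gap. Part~2 free edges are by definition edges of maximum potential, and this property is load-bearing: the bound $\decfree{t}\geq\free(t)\,\pot_t(e)(1-\mu^{-1/\q})$ in the proof of Lemma~\ref{lem:critical_turns} rests on it, and the identity~\eqref{eq:freeedges} for $\free(t)$ is used throughout the critical-turn machinery. Arbitrary substitute edges need not be maximal-potential, so counting them as free edges breaks that lower bound, and inflating $\free(t)$ to include them breaks~\eqref{eq:freeedges}. Separately, Observation~\ref{obs:isolation} is invoked in the proof of Lemma~\ref{lem:first_increase} precisely to fix the number of closing edges claimed at each endpoint of $e_M$ to $\deg_{M,t-1}(v)$ resp.\ $\deg_{M,t-1}(u)$; in an isolation turn that count is strictly smaller, which alters the computation of $\first{\defi}_t$. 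One can plausibly argue this is harmless at the isolated endpoint because its potential collapses to zero, but that interacts with the $\deczero{t}$ accounting and would have to be re-verified against all the downstream lemmas. You flag this as "careful bookkeeping" without doing it, so as written the argument does not close. The paper's dominance argument ("Breaker may regard $e_M$ as one of his own edges") is both shorter and complete for the WLOG it is asserting, and avoids re-opening the potential analysis altogether.
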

\begin{proof}
 Consider an arbitrary isolation $t$ turn in the game, i.e.,
 a turn, after which one of the nodes of the edge $e_M$
 claimed in this turn by \mak\ has no unclaimed incident edges left.
 Right after the turn, every triangle $e_M$ belongs to is already blocked by
 \bre, so the edge $e_M$ is of no use for \mak\ from this time on.
 \bre\ even could pretend that the edge $e_M$ belongs to his own edges,
 so that in the turn $t$ \bre\ claimed $\q+1$ edges and \mak\ didn't claim any edge.
 Hence, a perfectly playing \mak\ will always try to avoid isolation turns.
 If he can't, he will definitely loose the game,
 since he can only claim useless edges until the end of the game.
\end{proof}

The following observation states that,
as long as \bre\ can keep the total potential below $2n$,
he will have at least $2$ free edges in every turn.
\begin{observation}\label{obs:freeedges}
 For every turn $t$ with $\free(t)\leq 1$ there exists a turn $t'<t$
 with $\Pot_{t'}> 2n$.
\end{observation}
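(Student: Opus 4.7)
The plan is to extract from $\free(t)\leq 1$ a vertex with large Maker-degree, use the turn-by-turn monotonicity of the Maker-degree to find an earlier turn at which that degree was exactly $\lceil \q/2\rceil -1$, and then apply the contrapositive of Lemma~\ref{lem:potentialvsdegree}. Concretely, write the edge Maker claims in turn $t$ as $\{u,v\}$. Equation~\eqref{eq:freeedges} rewrites the hypothesis $\free(t)\leq 1$ as
\[
\deg_{M,t-1}(u)+\deg_{M,t-1}(v)\geq \q-1,
\]
so by pigeonhole one endpoint, say $u$, satisfies $\deg_{M,t-1}(u)\geq \lceil (\q-1)/2\rceil\geq \lceil \q/2\rceil -1$.

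Next I would locate a turn $t'<t$ at which the Maker-degree of $u$ equals $\lceil \q/2\rceil -1$ exactly. Since Maker claims one edge per turn and $\deg_{M,0}(u)=0$, the sequence $s\mapsto \deg_{M,s}(u)$ is non-decreasing and changes by at most $1$ per step. Taking $t'$ to be the largest turn $\leq t-1$ for which $\deg_{M,t'}(u)\leq \lceil \q/2\rceil -1$ does the job: if the inequality at $t'$ were strict, then $\deg_{M,t'+1}(u)\leq \deg_{M,t'}(u)+1$ would still be at most $\lceil \q/2\rceil -1$, contradicting maximality of $t'$.

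Once such a $t'$ is in hand, the contrapositive of Lemma~\ref{lem:potentialvsdegree} says that either $\pot_{t'}(u)=0$ or $\pot_{t'}(u)>2n$. In the latter case $\Pot_{t'}\geq \pot_{t'}(u)>2n$ and the observation follows. The only delicate point is ruling out the degenerate case $\pot_{t'}(u)=0$, which by Definition~\ref{def:pot2} would mean every edge incident to $u$ is already claimed at time $t'$. This cannot happen: Maker claims the edge $\{u,v\}$ in the later turn $t>t'$, so $\{u,v\}$ is still unclaimed after turn $t'$, forcing $\deg_{M,t'}(u)+\deg_{B,t'}(u)\leq n-2$ and hence $\pot_{t'}(u)>0$. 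This small observation about the yet-unclaimed edge $\{u,v\}$ is the only step that requires a moment's thought; everything else is direct bookkeeping on top of Lemma~\ref{lem:potentialvsdegree}.
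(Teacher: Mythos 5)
Your argument is correct and follows the same route as the paper's: rewrite $\free(t)\leq 1$ via equation~\eqref{eq:freeedges}, use pigeonhole to find an endpoint $u$ with $\deg_{M,t-1}(u)\geq\lceil q/2\rceil -1$, locate an earlier turn $t'$ where the degree is exactly $\lceil q/2\rceil-1$, and invoke the contrapositive of Lemma~\ref{lem:potentialvsdegree}. The one place you add something the paper glosses over is the explicit justification that $\pot_{t'}(u)>0$ (the paper merely asserts it); your observation that the edge $\{u,v\}$ claimed by Maker in the later turn $t$ must still be unclaimed after turn $t'$, so $u$ cannot be saturated, is exactly the missing detail, and it is a welcome clarification.
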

\begin{proof}
 Let $t$ be a turn with $\free(t)\leq 1$ and let $\{u,v\}$ be the \mak-edge 
 of this turn.
 Because $\free(t)=\q-\deg_{M,t-1}(u)-\deg_{M,t-1}(v)$,
 we get $\deg_{M,t-1}(u)+\deg_{M,t-1}(v)\geq q-1$, 
 so there exists $w\in\{u,v\}$ with 
 $\deg_{M,t-1}(w)\geq \ceiling{\tfrac{q-1}{2}}\geq \ceiling{\tfrac{q}{2}}-1$.
 Hence, there exists a turn $t'\leq t-1$ with $\deg_{M,t'}(w)=\ceiling{\tfrac{q}{2}}-1$ and $\pot_{t'}(w)>0$. We apply Lemma~\ref{lem:potentialvsdegree}
 and get $\pot_{t'}(w)>2n$, so especially $\Pot_{t'}>2n$.
\end{proof}

\subsection{Main results}
In this subsection we prove that Strategy~\ref{strat:ours} works correctly
and is a winning strategy.
For both theorems in this subsection we assume that \bre\ plays according to Strategy~\ref{strat:ours}.
We further assume that 
$\q=\sqrt{(\tfrac{8}{3}+\firsteps)n}$ for some $\firsteps>0$ as stated above
and that $n$ is sufficiently large. 
For \bre's strategy it is crucial that the potential of every node is kept
below a certain level. This is ensured by the following theorem.
\begin{theorem}\label{thm:smallpot}
  For every turn $s$ it holds $\Pot_s< 2n$.
\end{theorem}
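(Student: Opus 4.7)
The plan is to prove $\Pot_s<2n$ by induction on $s$. The base case is immediate from Lemma~\ref{lem:start_potential}, which gives $\Pot_0=n<2n$. For the inductive step, assume $\Pot_{s'}<2n$ for all $s'<s$, and show $\Pot_s<2n$.

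First I would invoke the contrapositive of Observation~\ref{obs:freeedges} together with the inductive hypothesis to conclude that every previous turn $t\leq s$ satisfied $\free(t)\geq 2$. This is essential: it guarantees that \bre's Part~2 is actually reached, so Strategy~\ref{strat:ours} is well-defined, and it supplies at least two high-potential edges to spend on reducing the total potential in every turn.

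Next I would compute the single-turn change $\Pot_s-\Pot_{s-1}$ by separating the contribution of \mak's edge $\{u,v\}$ from the contributions of \bre's $\q$ edges. From Definition~\ref{def:pot2}, bumping $\deg_M(u)$ from $d$ to $d+1$ increases $\defi(u)$ by $p_0(\q-d-\tfrac12)$, multiplying $\pot(u)$ by at most $\mu^{p_0}$; an analogous statement holds at $v$. Each of \bre's edges $\{a,b\}$ decreases $\defi(a)$ and $\defi(b)$ by $1$, multiplying the potential at each endpoint by $\mu^{-1/\q}$. Because $p_0<1$ by Remark~\ref{rem:p0} and \bre\ picks free edges greedily maximising $\pot(e)$, I expect the accumulated decrease over Part~2's $\free(t)$ edges to dominate \mak's gain whenever $\free(t)$ is close to $\q$; quantitatively, each such edge removes a $(1-\mu^{-1/\q})$-fraction of a sum bounded below by the average potential, giving a roughly multiplicative shrink factor of order $\exp(-\free(t)\ln(\mu)/(n\q))$ on $\Pot$.

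The main obstacle, and where the analysis becomes delicate, is the handling of the \emph{critical} turns in which $\deg_{M,t-1}(u)+\deg_{M,t-1}(v)$ is large, so $\free(t)$ is small and the greedy Part~2 no longer fully compensates for \mak's gain; in such turns $\Pot$ may genuinely rise. I would formalise a threshold separating critical from non-critical turns, show that a non-critical turn satisfies $\Pot_s\leq\Pot_{s-1}$ (up to a negligible slack), and bound the increase in a single critical turn by a controlled factor. Critical turns cannot occur arbitrarily often: by Lemma~\ref{lem:potentialvsdegree}, under the inductive hypothesis no node has yet reached $\deg_M=\lceil\q/2\rceil-1$, so nodes capable of forming the endpoints of a critical turn's \mak-edge are scarce, and each such turn consumes unclaimed incident edges at one of these nodes. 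Summing the bounded increases over all critical turns up to $s$ against the non-positive contribution of the non-critical turns, and exploiting the slack between $\Pot_0=n$ and the target $2n$ (together with the large factor $\mu^{1/\q}$ that lets one free edge wipe out $\Theta(1/\q)$ of the potential in each direction), should yield $\Pot_s<2n$ and close the induction.
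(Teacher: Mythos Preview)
Your inductive skeleton matches the paper's, but the heart of the argument---controlling critical turns---is where your sketch breaks down. You propose to show that critical turns are rare because ``nodes capable of forming the endpoints of a critical turn's \mak-edge are scarce.'' This does not follow from Lemma~\ref{lem:potentialvsdegree}: even under the inductive hypothesis (so every $\deg_M<\lceil \q/2\rceil-1$), nothing prevents many nodes from carrying \mak-degree around $\q/4$, and \mak\ can repeatedly connect two such nodes to produce turns with small $\free(t)$. A resource-depletion count on high-degree nodes gives no useful bound on either the number of critical turns or the total potential gained in them; since the game has $\Theta(n^{3/2})$ turns, even an $o(1)$ average increase per critical turn is not automatically absorbed by the slack between $n$ and $2n$. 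Your per-turn decrease estimate $\exp(-\free(t)\ln(\mu)/(n\q))$ also implicitly assumes that the best free edge has potential of order $\Pot/n$, which is not justified.

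The paper's actual mechanism is quite different and is the main technical contribution. The key structural fact (Lemma~\ref{lem:critical_turns}) is that if a turn $t$ is critical---meaning the $(1-\eta)$-fraction of the free-edge decrease fails to cover $\inc{t}-\utt{t}$---then \emph{every} edge still unclaimed after $t$ has potential at most $\tfrac{\mu p_0}{1-\eta}\,\pot_{t-1}(e_M)$, because \bre's greedy free edges would otherwise have been large enough to compensate. Iterating this over successive critical turns (Lemma~\ref{lem:crit_stacking}) forces all unclaimed-edge potentials to shrink geometrically, so after a \emph{constant} number $\numcrit$ of critical turns the bound contradicts the existence of any unclaimed edge at $u$. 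The proof then fixes the last turn $t_0$ with $\Pot_{t_0-1}\le n$, introduces three stopping times $t_1,t_2,t_3$ (potential of $u$ drops by a fixed factor; some node's potential jumps by a fixed factor; the $\numcrit$-th critical turn occurs), and shows via a case analysis (Lemmas~\ref{lem:t1}--\ref{lem:t3}, combined into Theorem~\ref{thm:no_increase}) that by $\tmin=\min(t_1,t_2,t_3)$ the total potential has returned to at most $\Pot_{t_0-1}$, while for intermediate $s<\tmin$ Observation~\ref{obs:auxobs} directly gives $\Pot_s<2n$. None of this machinery---in particular the edge-potential collapse forced by a critical turn---appears in your sketch, and without it there is no evident way to close the argument.
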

The proof of this theorem is the mathematical core of this paper and is given 
in the next section. The main result of our work is:
\begin{theorem}\label{thm:final}
  At the end of the game there exists no node
  with \mak-degree of at least $\q/2$ and \bre\ wins the game.
\end{theorem}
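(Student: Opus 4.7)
The plan is to combine Theorem~\ref{thm:smallpot} with Lemma~\ref{lem:potentialvsdegree} to show that no node ever attains \mak-degree at least $\lceil \q/2\rceil$, and then to deduce \bre's win from Part~1 of Strategy~\ref{strat:ours}, which turns such a degree bound into the invariant that every newly created \mak-path of length~$2$ is immediately closed by \bre.

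First, I would argue the degree bound by contradiction: suppose some node $v$ reaches $\deg_M(v)\ge\lceil \q/2\rceil$ at some point of the game. Since $\deg_M(v)$ starts at $0$ and grows by at most one in each \mak-move, there is a first turn $t^*$ with $\deg_{M,t^*}(v)=\lceil \q/2\rceil-1$. Theorem~\ref{thm:smallpot} yields $\pot_{t^*}(v)\le\Pot_{t^*}<2n$. If $\pot_{t^*}(v)>0$, Lemma~\ref{lem:potentialvsdegree} directly contradicts the equality $\deg_{M,t^*}(v)=\lceil \q/2\rceil-1$. Otherwise $\pot_{t^*}(v)=0$, which by Definition~\ref{def:pot2} means that all $n-1$ edges incident in $v$ are already claimed; but then $\deg_M(v)$ is frozen at $\lceil \q/2\rceil-1$ and never reaches $\lceil \q/2\rceil$, again a contradiction. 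Since $\lceil \q/2\rceil-1<\q/2$, this establishes the first assertion of the theorem.

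To derive the win, suppose for contradiction that \mak\ eventually completes a triangle on $\{a,b,c\}$, let $t$ be the turn in which \mak\ claims the last of these three edges (say $\{a,c\}$), and consider the \mak-path $(a,b,c)$ present at time $t-1$. This path first appears after some turn $s\le t-1$, namely the turn in which \mak\ claims the later of $\{a,b\}$ and $\{b,c\}$; without loss of generality that later edge is $\{b,c\}$. At time $s-1$ the edge $\{a,b\}$ is already a \mak-edge while $\{a,c\}$ is not (otherwise the triangle would be closed strictly before turn $t$), so $a$ lies in the \mak-neighborhood of $b$ at time $s-1$. By Part~1 of Strategy~\ref{strat:ours} applied to the \mak-edge $\{b,c\}$, \bre\ either claims $\{a,c\}$ as the closing edge of the new path $(a,b,c)$ in turn $s$, or $\{a,c\}$ is already a \bre-edge. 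Either way $\{a,c\}$ belongs to \bre\ after turn $s$, so \mak\ cannot claim it at the strictly later turn $t$, contradicting the choice of $t$. Hence no triangle is ever completed and \bre\ wins.

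The only delicate point in this plan is the edge case $\pot_{t^*}(v)=0$ in the first step, which is handled immediately by the observation that a fully saturated node has a frozen \mak-degree. Beyond that, the theorem is essentially a translation of the potential bound (Theorem~\ref{thm:smallpot}), which is the real work of the paper, into the language of triangles via the path-closing action of Part~1 of Strategy~\ref{strat:ours}.
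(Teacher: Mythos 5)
Your proposal is correct and takes essentially the same route as the paper: combine Theorem~\ref{thm:smallpot} with Lemma~\ref{lem:potentialvsdegree} for the degree bound, then use the path-closing action of Part~1. The only cosmetic differences are that the paper selects $t$ as the turn in which \mak\ claims his $\lceil \q/2\rceil$-th edge at $v$ (which immediately forces $\pot_{t-1}(v)>0$, avoiding the $\pot=0$ case you handle separately), and the paper's second paragraph is a one-line remark that the degree bound gives $\deg_M(u)+\deg_M(v)<\q$ so Part~1 is always fully executable, whereas your detailed contradiction argument implicitly relies on that same fact.
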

\begin{proof}
  Assume that there exists a node $v$ with $\deg_M(v)\geq \q/2$
  at the end of the game.
  Then, $\deg_M(v)\geq\lceil \q/2 \rceil$.
  Let $t$ denote the turn in which \mak\ claimed his
  \mbox{$\lceil \q/2\rceil$-th}
  edge incident in $v$, so $\deg_{M,t-1}(v)=\lceil \q/2\rceil -1$.
  Due to Theorem~\ref{thm:smallpot} we know that
  $\pot_{t-1}(v)\leq\Pot_{t-1}<2n$.
  Note that after turn $t-1$ there are still
  unclaimed edges incident in $v$, so $\pot_{t-1}(v)>0$.
  We apply Lemma~\ref{lem:potentialvsdegree} and get
  $\deg_{M,t-1}(v)\neq \lceil \q/2\rceil-1$, a contradiction.\par 
  So with every edge $\{u,v\}$ that \mak\ chooses he creates less 
  than $\deg_M(u)+\deg_M(v)<q$ new \mak-paths of length~$2$.
  Hence, \bre\ always has enough edges to close all \mak-paths of length~$2$
  and finally wins the game.
\end{proof}
\section{Analysis}
\subsection{Outline of the proof}
We proceed to prove Theorem~\ref{thm:smallpot}.
As it is depending on a series of lemmas,
for the reader's convenience
we first outline the argumentation in an informal way.
We distinguish two types of turns.
A turn is called \emph{non-critical},
if a certain fraction of the \bre-edges in this turn suffices
to compensate the total potential increase caused by \mak\ in this turn.
Otherwise, we call it \emph{critical}.
We start with an arbitrary critical turn $t_0$
in which the potential exceeds $n$.
Lemma~\ref{lem:critical_turns} gives us a useful characterization of critical turns.
This enables us to prove Theorem~\ref{thm:no_increase},
where we state that before a constant number of additional critical turns is played,
the total potential will sink below $n$ again.
Because a constant number of critical turns
cannot increase the total potential considerably much (Lemma~\ref{lem:crit_increase}),
we can prove that the total potential of the game never exceeds $2n$.
\subsection{Potential change in a single turn}
To analyze the potential change of a single turn,
we first present a few tools for estimation
of potential change caused by single \mak- and \bre-edges.
The next lemma shows how the addition of a single \mak-edge
changes the deficit of a node.
\begin{lemma}\label{lem:deficit_single}
  Consider an arbitrary point of time in the game.
  Let $u\in V$ and let
  ${\baldeg}'(u),\deg_M'(u)$ and $\defi'(u)$ be the
  balanced \bre-degree, \mak-degree and deficit of $u$
  after an additional edge incident in $u$ was claimed by \mak.
  Then,
  \[\defi'(u)-\defi(u)={\baldeg}'(u)-\baldeg(u)\leq p_0(\q-\deg_M(u)).\]
\end{lemma}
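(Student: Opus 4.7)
The plan is a direct algebraic verification from the definitions. First I would observe that claiming a new Maker edge incident to $u$ changes $\deg_M(u)$ from some value $d$ to $d+1$, but leaves $\deg_B(u)$ untouched. Since $\defi(u)=\baldeg(u)-\deg_B(u)$, the two differences in the statement are equal for free:
\[\defi'(u)-\defi(u)=\bigl(\baldeg'(u)-\deg_B(u)\bigr)-\bigl(\baldeg(u)-\deg_B(u)\bigr)=\baldeg'(u)-\baldeg(u).\]

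Next I would plug the definition of $\baldeg$ into the right-hand side. Only the term $-p_0\deg_M(u)(\q-\deg_M(u)/2)$ depends on $\deg_M(u)$, so with $d:=\deg_M(u)$ the difference reduces to
\[\baldeg'(u)-\baldeg(u)=p_0\Bigl[(d+1)\bigl(\q-\tfrac{d+1}{2}\bigr)-d\bigl(\q-\tfrac{d}{2}\bigr)\Bigr].\]
A short expansion of the bracket gives $\q-d-\tfrac12$, so
\[\baldeg'(u)-\baldeg(u)=p_0\bigl(\q-d-\tfrac12\bigr).\]

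The desired upper bound $p_0(\q-\deg_M(u))=p_0(\q-d)$ then follows immediately from $\q-d-\tfrac12\leq \q-d$ (and $p_0>0$). There is no real obstacle here: the content of the lemma is just that the balanced degree is a concave-quadratic function of $\deg_M(u)$ with marginal increment at most $p_0(\q-\deg_M(u))$, and the slack $\tfrac12\, p_0$ in the inequality is exactly the discrete second-difference correction. I would present the whole argument as a two-line display followed by the observation that $\q-d-\tfrac12\leq\q-d$.
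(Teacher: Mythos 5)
Your proof is correct and follows essentially the same route as the paper: the equality $\defi'(u)-\defi(u)=\baldeg'(u)-\baldeg(u)$ from the invariance of $\deg_B(u)$, followed by a direct expansion of the quadratic in $\baldeg$ to get the exact increment $p_0(\q-\deg_M(u)-\tfrac12)$, and then dropping the $-\tfrac12$. The only cosmetic difference is that the paper does not explicitly invoke $p_0>0$ at the final step, but the argument is the same.
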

\begin{proof}
  The equation follows from the fact
  that an additional \mak-edge does not change $\deg_B(u)$.
  Using that $\deg_M'(u)=\deg_M(u)+1$ we continue
  \begin{align*}
    &{\baldeg}'(u)-\baldeg(u)\\
    &= p_0\deg_M'(u)\left(\q-\frac{\deg_M'(u)}{2}\right)
      -p_0\deg_M(u)\left(\q-\frac{\deg_M(u)}{2}\right)\\
    &= p_0\left(\deg_M(u)\left(\q-\frac{\deg_M(u)+1}{2}\right)
      +\left(\q-\frac{\deg_M(u)+1}{2}\right)\right)\\
    &-p_0\deg_M(u)\left(\q-\frac{\deg_M(u)}{2}\right)\\
    &=p_0(\q-\deg_M(u)-1/2)
      \leq p_0(\q-\deg_M(u)).
  \end{align*}
\end{proof}

\begin{lemma}\label{lem:pot_single}
\begin{itemize}
 \item[(i)] A single edge $e_M$ claimed by \mak\ increases the potential of a node
  by at most a factor of $\mu$
  and causes a total potential increase of at most $(\mu-1)\pot(e_M)$
  (where $\pot(e_M)$ denotes the potential of $e_M$
  when claimed by \mak).
 \item[(ii)] A single edge $e_B$ claimed by \bre\ causes a total
  potential decrease of at least $(1-\mu^{-1/\q})\pot(e_B)$
  (where $\pot(e_B)$ denotes the potential of $e_B$
  when claimed by \bre).
\end{itemize}
\end{lemma}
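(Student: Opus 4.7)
The plan is to analyze both parts by tracking the effect of the single played edge on the deficit of each of its two endpoints, then exponentiating to convert deficit changes into multiplicative potential changes via the formula $\pot(v)=\mu^{\defi(v)/\q}$. The key simplification is that any node not incident to the played edge sees no change in $\deg_M$ or $\deg_B$, and hence no change in its potential; the total potential change therefore collapses to a sum over just the two endpoints.

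For part~(i), I would invoke Lemma~\ref{lem:deficit_single} directly at each endpoint $u$ of $e_M$, obtaining $\defi'(u)-\defi(u)\le p_0(\q-\deg_M(u))$. Since $p_0<1$ by Remark~\ref{rem:p0} and $\q-\deg_M(u)\le \q$, the right-hand side is at most $\q$, and exponentiating yields $\pot'(u)\le \mu\cdot\pot(u)$, which is the per-node claim (the saturation case $\pot'(u)=0$ only strengthens this). Summing then gives
$(\pot'(u)-\pot(u))+(\pot'(w)-\pot(w))\le (\mu-1)(\pot(u)+\pot(w))=(\mu-1)\pot(e_M)$,
which is the total bound.

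For part~(ii), the argument is actually cleaner: when \bre\ claims $e_B=\{u,w\}$, only $\deg_B$ changes at the two endpoints, and $\baldeg$ depends only on $\deg_M$, so the deficit of each endpoint decreases by exactly $1$. Exponentiating gives $\pot'(u)=\mu^{-1/\q}\pot(u)$, so the per-node decrease is exactly $(1-\mu^{-1/\q})\pot(u)$; if the node saturates so that $\pot'(u)=0$, the decrease is $\pot(u)\ge (1-\mu^{-1/\q})\pot(u)$ because $\mu^{-1/\q}\in(0,1)$. Summing over the two endpoints completes the argument.

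I do not expect a serious obstacle here: both parts are essentially bookkeeping once one has Lemma~\ref{lem:deficit_single}. The only subtlety is the saturation case where a node's potential jumps to $0$, but this always tightens the inequality in the direction we want. The conceptual point worth emphasizing is the asymmetry between the two edge types: a \bre-edge produces a clean $-1$ change in $\defi$, whereas a \mak-edge requires the quantitative increase bound from Lemma~\ref{lem:deficit_single} to translate into a bound on the potential factor.
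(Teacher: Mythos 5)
Your proof is correct and follows essentially the same route as the paper: for (i) use Lemma~\ref{lem:deficit_single} plus $p_0<1$ to bound the deficit increment by $\q$ and hence the multiplicative factor by $\mu$, and for (ii) use the exact $-1$ deficit change at each endpoint, both with the saturation-to-zero case handled separately, then sum over the two incident nodes. No gaps.
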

\begin{proof}
  (i). Let $e_M=\{u,v\}$. 
  For $w\in V$ let $\pot(w)$ denote the potential of $w$ before
  \mak\ claimed $e_M$ and $\pot'(w)$ denote the potential of $w$
  directly after \mak\ claimed $e_M$.
  If $e_M$ is not incident in $w$, the potential of $w$
  remains unchanged. If $e_M$ is the last unclaimed edge incident in $w$,
  $\pot'(w)=0$ and we are done.
  Otherwise we can apply Lemma~\ref{lem:deficit_single}
  and Remark~\ref{rem:p0} and get
  \begin{equation*}
    \frac{\pot'(w)}{\pot(w)}=\mu^{(\defi'(w)-\defi(w))/\q}
    \leq \mu^{p_0(\q-\deg_M(w))/\q}\leq \mu.
  \end{equation*}
  Because $e_M$ only changes the potential of $u$ and $v$,
  the total potential increase is
  $\pot'(v)-\pot(v)+\pot'(u)-\pot(u)\leq (\mu-1)\pot(e_M)$.\par
  (ii). Let $e_B=\{u,v\}$.
  Because $e_B$ only changes the potential of $u$ and $v$,
  the total potential decrease caused by $e_B$ is
  $\pot(v)-\pot'(v)+\pot(u)-\pot'(u)$,
  where $\pot(w)$ denotes the potential of $w$ before
  \bre\ claimed $e_B$ and $\pot'(w)$ denote the potential of $w$
  directly after \bre\ claimed $e_B$.
  We show that
  \[\pot(v)-\pot'(v)\geq(1-\mu^{-1/\q})\pot(v).\]
  Because the same holds for $u$, the claim~(ii) follows.
  If $e_B$ is the last unclaimed edge in $v$, $\pot'(v)=0$.
  Otherwise,
  \begin{equation*}
    \frac{\pot'(v)}{\pot(v)}=\mu^{(\defi'(u)-\defi(u))/\q}
    =\mu^{-1/\q},
  \end{equation*}
  where the last equation follows from the fact that
  a \bre-edge does not change $\baldeg(v)$ and increases $\deg_B(v)$ by $1$.
\end{proof}

Every turn $t$ starts with a \mak\ move,
i.e.\ an edge $\{u,v\}$ being claimed by \mak\
followed by $\q$ \bre\ moves.
While the \mak\ move causes a potential increase,
\bre's moves cause a decrease.
For every node $w\in V$, we denote its potential increase by $\inc{t}(w)$
and its potential decrease by $\dec{t}(w)$.
Note that every claimed edge only changes the potential
of its two incident nodes.
When following \bre's strategy,
there are four possible ways of potential decrease for the node $w$:
decrease caused by free edges, denoted by $\decfree{t}(w)$
and decrease caused by closing edges,
either $w$ being their head, denoted by $\decheads{t}(w)$,
or their tail, denoted by $\dectails{t}(w)$.
In the special case in which \mak\ or \bre\ claim the last unclaimed edge incident in $w$, 
the potential of $w$ is set to $0$, which causes an additional potential decrease. For technical reasons, this additional decrease is considered seperately and denoted by $\deczero{t}(w)$.
If for example \bre\ claims a free edge that is the last unclaimed edge incident in $w$, this edge contributes both to $\decfree{t}(w)$ and $\deczero{t}(w)$. For the contribution to $\decfree{t}(w)$ we only 
compute the potential change caused by the change of the balance value
and for the contribution to $\deczero{t}(w)$ we take the real potential decrease caused by the edge and subtract the computed contribution to $\decfree{t}(w)$.
Moreover, we further split $\decheads{t}(w)$ into two parts
$\decheads{t}(w)=\utt{t}(w)+\ott{t}(w)$, where 
\[\utt{t}(w):=\min\{\inc{t}(w),\decheads{t}(w)\}
  \ \textnormal{ and }\
  \ott{t}(w):=\max\{\decheads{t}(w)-\inc{t}(w),0\}.\]
If \mak\ claims an edge that connects two nodes with a very high \mak-degree,
it might happen that $\decheads{t}(w)>\inc{t}(w)$ for one or both of the newly connected nodes.
Otherwise, $\ott{t}=0$ and $\utt{t}(w)=\decheads{t}(w)$.\par
If for one of these values we omit the argument,
we always mean the total potential increase (decrease) added up over all nodes.
For example, $\inc{t}:=\sum_{v\in V}\inc{t}(v)$.
For every turn $t$ we have
\[\Pot_t-\Pot_{t-1}
  =\inc{t}-\dec{t}
  =\inc{t}-(\decfree{t}+\utt{t}+\ott{t}+\dectails{t}+\deczero{t}).\]
\begin{lemma}\label{lem:first_increase}
  Let $t$ be an arbitrary turn. Let $e_M$ be the \mak-edge of this turn.
  Then,
  \begin{itemize}
  \item[(i)] for every $w\in V$ it holds
    $\inc{t}(w)-\utt{t}(w)\leq(\mu^{p_0\free(t)/\q}-1)\pot_{t-1}(w)$.
  \item[(ii)]$\inc{t}-\utt{t}\leq(\mu^{p_0\free(t)/\q}-1)\pot_{t-1}(e_M)$.
  \end{itemize}
\end{lemma}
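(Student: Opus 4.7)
The plan is to prove (i) by carefully bookkeeping the change in the deficit of a single node $w$ across \mak's edge $e_M$ together with those Part~1 closing edges that have $w$ as head; (ii) then follows by summing (i) over $w\in V$ and observing that only $w\in\{u,v\}$ contribute.

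First I would dispose of the case $w\notin\{u,v\}$: since $e_M$ is not incident with $w$ we have $\inc{t}(w)=0$, and since no closing edge has $w$ as head we have $\utt{t}(w)=0$, so the claim reduces to $0\leq(\mu^{p_0\free(t)/\q}-1)\pot_{t-1}(w)$, which is immediate. For the nontrivial case I may take $w=u$ by symmetry. I would first invoke Observation~\ref{obs:isolation} so that Part~1 of the strategy really claims exactly $\deg_{M,t-1}(v)$ edges with $u$ as head. The main step is to show that the combined effect on $\defi(u)$ of \mak's edge and those $\deg_{M,t-1}(v)$ \bre-edges is at most $p_0\free(t)$: Lemma~\ref{lem:deficit_single} bounds the increase from \mak's edge by $p_0(\q-\deg_{M,t-1}(u))$, and each closing edge with $u$ as head leaves $\baldeg(u)$ fixed while raising $\deg_B(u)$ by one, hence lowers $\defi(u)$ by exactly one. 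Together with~\eqref{eq:freeedges} this gives
\[
p_0(\q-\deg_{M,t-1}(u))-\deg_{M,t-1}(v)=p_0\free(t)-(1-p_0)\deg_{M,t-1}(v)\leq p_0\free(t),
\]
where the last inequality uses $p_0<1$ from Remark~\ref{rem:p0}.

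Translating this additive deficit bound into a multiplicative potential bound via $\pot(u)=\mu^{\defi(u)/\q}$ yields $\inc{t}(u)-\decheads{t}(u)\leq\pot_{t-1}(u)\bigl(\mu^{p_0\free(t)/\q}-1\bigr)$. Since the definition $\utt{t}(u)=\min\{\inc{t}(u),\decheads{t}(u)\}$ gives $\inc{t}(u)-\utt{t}(u)=\max\{\inc{t}(u)-\decheads{t}(u),0\}$ and the right-hand side just displayed is already nonnegative, (i) follows. Summing (i) over $w\in V$ and using $\pot_{t-1}(u)+\pot_{t-1}(v)=\pot_{t-1}(e_M)$ yields (ii) immediately.

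The only mildly delicate point I see is the cancellation $(p_0-1)\deg_{M,t-1}(v)\leq 0$: it is exactly this that replaces a naive exponent of order $p_0$ by the sharper $p_0\free(t)/\q$ and is the reason the lemma will be strong enough to be useful later. Conceptually, $\utt{t}(w)$ records the portion of \mak's potential charge at $w$ that Part~1 closing edges absorb immediately, so after this absorption only the ``unused'' portion $p_0\free(t)$ of \mak's turn survives. Beyond this, and the bookkeeping convention that $\inc{t},\decheads{t}$ are read off from the balance formula (with any discrepancy from a last-unclaimed-edge situation absorbed into $\deczero{t}$), I do not anticipate any real obstacle.
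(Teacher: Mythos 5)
Your proof is correct and follows essentially the same route as the paper: reduce to $w\in\{u,v\}$, track the deficit change through \mak's edge (via Lemma~\ref{lem:deficit_single}) and the head closing edges, use~\eqref{eq:freeedges} to obtain the $p_0\free(t)$ bound, and exponentiate. The only cosmetic difference is that you package the case split via $\inc{t}(u)-\utt{t}(u)=\max\{\inc{t}(u)-\decheads{t}(u),0\}$ together with nonnegativity of the right-hand side, whereas the paper first disposes of the case $\utt{t}(w)=\inc{t}(w)$ explicitly and then computes in the complementary case; the underlying argument is identical.
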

\begin{proof}
  $(i)$.
  Let $e_M=\{u,v\}$.
  First note that if $\utt{t}(w)\neq\decheads{t}(w)$,
  it follows that $\utt{t}(w)=\inc{t}(w)$, so there is nothing more to show.
  Otherwise, the term $\inc{t}(w)-\utt{t}(w)$
  describes the change of the potential of $w$
  from the beginning of the turn $t$ to the end of part $1$ of \bre's
  moves in the same turn,
  where we ignore the changes caused by tails of closing edges.
  For $w\notin\{u,v\}$ this is $0$ and we are done.
  So let $w\in\{u,v\}$ and let
  $\first{\deg}_{M,t}(w),\first{\deg}_{B,t}(w),\first{\baldeg}_{t}(w)$ and $\first{\defi}_{t}(w),\first{\pot}_{t}(w)$
  be the \mak-degree, \bre-degree, balanced degree, deficit and potential
  of $w$ after part $1$ of \bre's moves
  (i.e.\ after all closing edges have been claimed).
  To compute the change of the potential of $w$,
  we start by computing the change of its deficit.
  We have
  \begin{align*}
    \first{\defi}_t(w)-\defi_{t-1}(w)
    &=\first{\baldeg}_t(w)-\first{\deg}_{B,t}(w)
      -\baldeg_{t-1}(w)+\deg_{B,t-1}(w)\\
    &=(\first{\baldeg}_t(w)-\baldeg_{t-1}(w))
      -(\first{\deg}_{B,t-1}(w)-\deg_{B,t}(w)).
  \end{align*}
  The first term describes the change of $\baldeg(w)$.
  Since \bre-edges do not influence this value,
  this change is caused solely by $e_M$.
  Due to Lemma~\ref{lem:deficit_single}, this is at most
  $p_0(b-\deg_{M,t-1}(w))$.
  The second term simply describes the number of closing edges
  claimed incident to $w$. Due to Observation~\ref{obs:isolation},
  $t$ is no isolation turn, so
  in case of $w=u$, this is $\deg_{M,t-1}(v)$
  and in case of $w=v$ this is $\deg_{M,t-1}(u)$.
  Together with~\eqref{eq:freeedges} and Remark~\ref{rem:p0} this gives
  \begin{equation}\label{equ}
    \first{\defi}_t(u)-\defi_{t-1}(u)=p_0(\q-\deg_{M,t-1}(u))-\deg_{M,t-1}(v)
    \leq p_0\free(t)
  \end{equation}
  and
  \begin{equation}\label{eqv}
    \first{\defi}_t(v)-\defi_{t-1}(v)=p_0(\q-\deg_{M,t-1}(v))-\deg_{M,t-1}(u)
    \leq p_0\free(t).
  \end{equation}
  This implies
  \begin{align*}
    \inc{t}(w)-\utt{t}(w)
    &=\first{\pot}_{t}(w)-\pot_{t-1}(w)
      =\mu^{\first{\defi}_t(w)/\q}-\pot_{t-1}(w)\\
    &=(\mu^{(\first{\defi}_t(w)-\defi_{t-1}(w))/\q}-1)\pot_{t-1}(w)\\
    &\overset{\mathclap{\eqref{equ},\eqref{eqv}}}{\leq} 
    \hspace{4mm}(\mu^{p_0\free(t)/\q}-1)\pot_{t-1}(w).   
  \end{align*}
  $(ii)$.
  Note that $\inc{t}=\inc{t}(u)+\inc{t}(v)$
  and $\utt{t}=\utt{t}(u)+\utt{t}(v)$,
  so we have
  \begin{align*}
    \inc{t}-\utt{t}
    &=\inc{t}(u)+\inc{t}(v)-(\utt{t}(u)+\utt{t}(v))\\
    &=\inc{t}(u)-\utt{t}(u)+\inc{t}(v)-\utt{t}(v)\\
    &\overset{(i)}{\leq} (\mu^{p_0\free(t)/\q}-1)\pot_{t-1}(u)+(\mu^{p_0\free(t)/\q}-1)\pot_{t-1}(v)\\
    &=(\mu^{p_0\free(t)/\q}-1)\pot_{t-1}(e_M).
  \end{align*}

\end{proof}

\subsection{Critical turns}\label{sec:crit_turns}
Since $\mu\overset{n\rightarrow\infty}\longrightarrow 1$,
with Remark~\ref{rem:p0} and
$n$ big enough we get $\mu p_0<1$.
Fix $\eta\in(0,1-\mu p_0)$ and define the following parts of potential change.
\begin{definition}\label{def:crit}
  For every turn $t$ let
  \[\critdiff{t}:=\inc{t}-\utt{t}-(1-\eta)\decfree{t}\]
  and
  \[\restdiff{t}:=\ott{t}+\dectails{t}+\eta\decfree{t}+\deczero{t}.\]
  We call $t$ \emph{critical}, if $\critdiff{t}> 0$
  and \emph{non-critical} otherwise.
\end{definition}
Note that $\Pot_t-\Pot_{t-1}=\critdiff{t}-\restdiff{t}$. Since $\restdiff{t}\geq 0$,
every turn $t$ with $\Pot_t>\Pot_{t-1}$ is critical.

\begin{lemma}\label{lem:tech1}
  For
  all $x\in\R$ with $x\geq 1$ it holds
  $x(1-\mu^{-1/\q})\geq 1-\mu^{-x/\q}$.
\end{lemma}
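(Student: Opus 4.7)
The plan is to recognize this inequality as a direct consequence of Bernoulli's inequality. I would first substitute $a := \mu^{-1/\q}$, which lies in $(0,1)$ because $\mu = 1 + \frac{\lam\ln(n)}{\epsd\q} > 1$. Under this substitution the claim becomes $x(1-a) \geq 1 - a^{x}$ for every real $x \geq 1$, and this is what I will establish.

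Setting $y := a - 1 \in (-1,0)$, Bernoulli's inequality $(1+y)^{x} \geq 1 + xy$, which holds for $y \geq -1$ and real $x \geq 1$, gives
\[a^{x} = (1+(a-1))^{x} \geq 1 + x(a-1) = 1 - x(1-a),\]
and rearranging yields exactly $x(1-a) \geq 1 - a^{x}$. Substituting $a = \mu^{-1/\q}$ back in recovers the stated inequality.

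If a more self-contained argument is preferred, I would instead set $h(x) := 1 - a^{x}$ and observe that $h(0) = 0$ while $h''(x) = -a^{x}(\ln a)^{2} \leq 0$, so $h$ is concave on $[0,\infty)$. A concave function vanishing at the origin has non-increasing secant slope $h(x)/x$ on $(0,\infty)$, so for $x \geq 1$ one has $h(x)/x \leq h(1) = 1 - a$, which is the same conclusion.

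There is essentially no technical obstacle here: the inequality is tight at $x = 1$ (both sides equal $1 - a$), and the only point needing verification is that $a \in (0,1)$ so that Bernoulli is applied in the concave regime. The lemma is really just the standard tangent/secant comparison for the concave function $x \mapsto 1 - \mu^{-x/\q}$ at the base point $x = 1$, and its role later will be to convert a sum of $\q$ equal per-edge potential drops into a single joint bound on the cumulative drop.
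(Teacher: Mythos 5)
Your proof is correct, and it takes a genuinely more elementary route than the paper. The paper's proof fixes $g(x) := x(1-\mu^{-1/\q})$ and $h(x) := 1-\mu^{-x/\q}$, notes $g(1)=h(1)$, and then compares derivatives: since $h''<0=g''$, it suffices to check $g'(1)\geq h'(1)$, which is verified by the elementary bound $e^y-1\geq y$ applied to $y=\ln(\mu)/\q$ and a multiplication by $\mu^{-1/\q}$. You instead substitute $a:=\mu^{-1/\q}\in(0,1)$ and invoke Bernoulli's inequality $(1+y)^x\geq 1+xy$ with $y=a-1\in(-1,0)$ and real exponent $x\geq 1$, giving the claim in one line. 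This is cleaner and avoids the two layers of derivative bookkeeping; the only small point to be explicit about (which you do flag) is that Bernoulli in this form requires $x\geq 1$, which is exactly the hypothesis. Your fallback concavity argument is essentially a repackaging of the paper's derivative comparison (both exploit concavity of $x\mapsto 1-a^x$ and tightness at $x=1$), so of your two arguments the Bernoulli one is the genuinely distinct contribution.
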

\begin{proof}
  We define $g(x):=x(1-\mu^{-1/\q})$
  and $h(x):=1-\mu^{-x/\q}$,
  so we have to show $g(x)\geq h(x)$ for all $x\geq 1$.
  First note that $g(1)=h(1)$,
  so it suffices to show that $g'(x)\geq h'(x)$
  for all $x\geq 1$.
  We have $g'(x)=1-\mu^{-1/\q}$
  and $h'(x)=\mu^{-x/\q}\frac{\ln(\mu)}{\q}$.
  Because for all $x>0$ we have $h''(x)=-\mu^{-x/\q}\left(\frac{\ln(\mu)}{\q}\right)^2<0=g''(x)$,
  it suffices to show that $g'(1)\geq h'(1)$.
  To see this, we use the fact that $e^y-1\geq y$ for all $y\geq 0$,
  so especially
  $\mu^{1/\q}-1\geq\frac{\ln(\mu)}{\q}$.
  If we multiply both sides with $\mu^{-1/\q}$, we get
  \[1-\mu^{-1/\q}\geq \mu^{-1/\q}\frac{\ln(\mu)}{\q}.\]
  Because the left hand side is $g'(1)$ and the right hand side is $h'(1)$,
  we are done.
\end{proof}
The following lemma provides an important characterization of critical turns
by an upper bound for the potential of all edges still unclaimed after the turn.
\begin{lemma}\label{lem:critical_turns}
  Let $t$ be a critical turn with $\free(t)\geq 2$ and
  let $e_M$ be the edge chosen by \mak\ in this turn.
  For every edge $e$ that is still unclaimed after $t$
  it holds
  \[\pot_t(e)< \frac{\mu p_0}{(1-\eta)}\pot_{t-1}(e_M).\]
\end{lemma}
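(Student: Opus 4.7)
My plan is to exploit \bre's greedy choice of free edges in Part~2 to translate a lower bound on the collective decrease $\decfree{t}$ into a uniform upper bound on the potential of any edge still unclaimed after turn $t$, and then to cancel this against the critical-turn inequality $(1-\eta)\decfree{t}<\inc{t}-\utt{t}$ together with Lemma~\ref{lem:first_increase}(ii). Fix an edge $e$ that remains unclaimed after turn $t$, enumerate the free edges as $e_1,\ldots,e_k$ (where $k:=\free(t)\geq 2$) in the order \bre\ claims them, and let $p_i$ denote the potential of $e_i$ immediately before \bre\ picks it.

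The key observation is that at every step $i$ the edge $e$ is still unclaimed, so Strategy~\ref{strat:ours} forces $p_i$ to be at least the potential of $e$ at that moment; since only \bre\ plays during Part~2 and \bre-moves never raise any edge's potential, this in turn is at least $\pot_t(e)$. Hence $p_i\geq\pot_t(e)$ for every $i$. By the argument of Lemma~\ref{lem:pot_single}(ii), each $e_i$ contributes exactly $(1-\mu^{-1/\q})p_i$ to $\decfree{t}$ (the possible excess at vertices whose last unclaimed edge is $e_i$ is booked into $\deczero{t}$, not $\decfree{t}$), so
\[\decfree{t}=(1-\mu^{-1/\q})\sum_{i=1}^{k}p_i\;\geq\;k(1-\mu^{-1/\q})\pot_t(e)\;\geq\;(1-\mu^{-k/\q})\pot_t(e),\]
where the final step applies Lemma~\ref{lem:tech1} with $x=k\geq 1$. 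Combining this with criticality and Lemma~\ref{lem:first_increase}(ii) yields
\[(1-\eta)(1-\mu^{-k/\q})\pot_t(e)\;<\;(\mu^{p_0 k/\q}-1)\pot_{t-1}(e_M).\]

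What remains is the analytic bound $\mu^{p_0 k/\q}-1\leq\mu p_0(1-\mu^{-k/\q})$, which is the only place I expect the calculation to demand care. My plan is to apply convexity of $z\mapsto\mu^{z}$ twice on $[0,1]$: setting $y:=k/\q\in(0,1]$ and using $p_0\in(0,1)$ from Remark~\ref{rem:p0}, the chord inequality gives both $\mu^{p_0 y}\leq 1+p_0 y(\mu-1)$ and $\mu^{1-y}\leq 1+(1-y)(\mu-1)$, the latter rearranging to $\mu p_0(1-\mu^{-y})\geq p_0 y(\mu-1)$. Chaining these shows $\mu^{p_0 y}-1\leq p_0 y(\mu-1)\leq\mu p_0(1-\mu^{-y})$, so dividing the previous display by the positive quantity $(1-\eta)(1-\mu^{-k/\q})$ gives the desired strict bound $\pot_t(e)<\frac{\mu p_0}{1-\eta}\pot_{t-1}(e_M)$.
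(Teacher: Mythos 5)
Your proof is correct, and its overall architecture is the same as the paper's: bound $\inc{t}-\utt{t}$ via Lemma~\ref{lem:first_increase}~(ii), invoke the critical-turn inequality $(1-\eta)\decfree{t}<\inc{t}-\utt{t}$, lower-bound $\decfree{t}$ by $\free(t)(1-\mu^{-1/\q})\pot_t(e)$ using the greedy choice of free edges together with Lemma~\ref{lem:pot_single}~(ii), and combine. The difference is entirely in the closing analytic step. The paper factors $\mu^{p_0\free(t)/\q}-1=\mu^{p_0\free(t)/\q}\bigl(1-\mu^{-p_0\free(t)/\q}\bigr)\leq\mu\bigl(1-\mu^{-p_0\free(t)/\q}\bigr)$ and applies Lemma~\ref{lem:tech1} with $x=p_0\free(t)$, which requires the auxiliary check $p_0\free(t)>1$ (this is the reason the paper pauses to verify $p_0>8/(3\beta)>2/3$ and $\free(t)\geq 2$). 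You instead apply Lemma~\ref{lem:tech1} with $x=\free(t)$ on the $\decfree{t}$ side (where $x\geq 2\geq 1$ is immediate), and then must supply the independent bound $\mu^{p_0 y}-1\leq\mu p_0(1-\mu^{-y})$ for $y=\free(t)/\q\in(0,1]$, which you obtain by two applications of the chord inequality for the convex map $z\mapsto\mu^{z}$ on $[0,1]$, using $p_0<1$ (Remark~\ref{rem:p0}) and $\free(t)\leq\q$. Both chains are valid and end with the same cancellation; your version trades the paper's check $p_0\free(t)>1$ for the elementary double-convexity estimate, which is a modest simplification. One minor point of rigor you handled carefully: the identity $\decfree{t}=(1-\mu^{-1/\q})\sum_i p_i$ is exact precisely because of the paper's bookkeeping convention that any extra drop to zero is charged to $\deczero{t}$, not $\decfree{t}$; the paper's own wording at this step is slightly looser (it appeals to Lemma~\ref{lem:pot_single}~(ii), which bounds the \emph{total} decrease), so your phrasing is if anything cleaner.
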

\begin{proof}
  Let $e_M=\{u,v\}$.
  By Lemma~\ref{lem:first_increase}~(ii) we have
  \begin{align*}
    \inc{t}-\utt{t}
    &\leq (\mu^{p_0\free(t)/\q}-1)\pot_{t-1}(e_M)\\
    &=\mu^{p_0\free(t)/\q}(1-\mu^{-p_0\free(t)/\q})\pot_{t-1}(e_M)\\
    &\leq\mu(1-\mu^{-p_0\free(t)/\q})\pot_{t-1}(e_M)
  \end{align*}
  We apply Lemma~\ref{lem:tech1} with $x:=p_0\free(t)$
  (note that due to Remark~\ref{rem:p0} we have 
  $x>\tfrac{8}{3\beta}\free(t)>\tfrac{8}{12}\free(t)\geq\tfrac{16}{12}>1$)
  and get 
  \begin{equation*}
   \inc{t}-\utt{t}\leq \mu p_0\free(t)(1-\mu^{-1/\q})\pot_{t-1}(e_M).
  \end{equation*}
  Because $t$ is a critical turn, we get
  \begin{align*}
    0<\diff{t}
    &=\inc{t}-\utt{t}-(1-\eta)\decfree{t}\\
    &\leq\mu p_0\free(t)(1-\mu^{-1/\q})\pot_{t-1}(e_M)-(1-\eta)\decfree{t},
  \end{align*}
  implying
  \begin{equation}\label{eq:critturn}
    (1-\eta)\decfree{t}< \mu p_0\free(t)(1-\mu^{-1/\q})\pot_{t-1}(e_M).
  \end{equation}
  Now let $e$ be an edge that after turn~$t$ still is unclaimed.
  Then every free edge claimed
  by \bre\ in turn~$t$ has at least a potential of $\pot_{t}(e)$
  because \bre\ iteratively chooses the edge with maximum potential and
  every edge claimed by \bre\ only decreases potential.
  Due to Lemma~\ref{lem:pot_single}~(ii)
  every free edge causes a total potential decrease of at least
  $\pot_t(e)(1-\mu^{-1/\q})$
  and hence we get
  \[\decfree{t}\geq \free(t)\pot_{t}(e)(1-\mu^{-1/\q}).\]
  Together with~\eqref{eq:critturn} this implies
  $\pot_t(e)< \frac{\mu p_0}{(1-\eta)}\pot_{t-1}(e_M)$.
\end{proof}
\subsection{Increase of total potential}\label{sec:increase}
With our strategy we cannot guarantee that $\Pot_t\leq\Pot_{t-1}$ for all turns $t$.
But we will show that each turn $t_0$ at which the potential exceeds $n$
is followed closely by a turn at which the total potential
is at most as big as it was before $t_0$.
So in the long run we obtain a decrease of the total potential,
which will ensure Breaker's win.
\par
Fix constant parameters $\epsa\in(0,1)$, and $\epsb>0$ with
\begin{equation}\label{eq:choiceeps}
  \frac{1-\eta}{(1+\epsb)\mu p_0}>1.
\end{equation}
Recall that this possible, because $\eta<1-\mu p_0$ by the choice of $\eta$.
Define 
\[\numcrit:=\left\lceil\frac{1-\log(1-\epsa)}{\log(1-\eta)-\log(1+\epsb)-\log(\mu p_0)}\right\rceil\]
and note that $\numcrit>0$ due to~\eqref{eq:choiceeps}.
Although $\numcrit$ depends on $n$, it is bounded by constants
because $1<\mu<2$ for $n$ sufficiently big.
Let $t_0$ be a turn with $\Pot_{t_0}>n,\Pot_{t_0-1}\leq n$
and $\Pot_t<2n$ for all $t<t_0$.
Then, $t_0$ is a critical turn
and due to Observation~\ref{obs:freeedges} it holds $\free(t_0)\geq 2$.
Let $e_0=\{u,v\}$ be the edge claimed by \mak\ in this turn
and w.l.o.g.\ let $\pot_{t_0-1}(u)\geq\pot_{t_0-1}(v)$.
We consider three points of time:
\begin{itemize}
\item Let $t_1$ be the first turn after $t_0-1$
  with $\pot_{t_1}(u)\leq(1-\epsa)\pot_{t_0-1}(u)$.
\item Let $t_2$ be the first turn after $t_0$
  with $\pot_{t_2}(w)\geq (1+\epsb)\pot_{s}(w)$
  for some $w\in V$ and some turn $s$ with $t_0\leq s<t_2$.
\item Let $t_3$ be the $\numcrit$-th critical turn after $t_0-1$.
\end{itemize}
If the game ends before the turn $t_i$ is reached,
let $t_i:=\infty$. We set $\tmin:=\min(t_1,t_2,t_3)$
(note that $\tmin=\infty$ is possible)
and aim to prove the following theorem
\begin{theorem}\label{thm:no_increase}
  Let $n$ sufficiently big.
  If the game is not ended before turn~$\tmin$,
  then $\Pot_{t^*}\leq\Pot_{t_0-1}$.
\end{theorem}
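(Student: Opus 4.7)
The plan is to telescope
\[\Pot_{\tmin} - \Pot_{t_0-1} = \sum_{t=t_0}^{\tmin}\left(\diff{t} - \restdiff{t}\right)\]
and separate critical from non-critical turns. Every non-critical turn has $\diff{t} \leq 0$ by definition and $\restdiff{t} \geq 0$, so it contributes at most $0$ to the telescoping sum. It therefore suffices to dominate the critical-turn contributions by the accumulated $\restdiff{t}$ together with the slack afforded by whichever of $t_1,t_2,t_3$ equals $\tmin$.

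As the structural heart of the argument I would establish a geometric decay of $S_i := \pot_{s_i-1}(e_M)$ across the critical turns $s_1 = t_0 < s_2 < \cdots < s_k$ in $[t_0,\tmin]$ (where $e_M$ denotes the \mak-edge of the corresponding turn). Since Observation~\ref{obs:freeedges} gives $\free(s_i) \geq 2$ as long as $\Pot < 2n$, Lemma~\ref{lem:critical_turns} applies at $s_i$ and yields $\pot_{s_i}(e) < \frac{\mu p_0}{1-\eta}\, S_i$ for every edge $e$ still unclaimed after $s_i$, in particular for the \mak-edge of turn $s_{i+1}$. Because every intermediate turn lies strictly before $t_2$, the defining property of $t_2$ gives $\pot_{s_{i+1}-1}(w) < (1+\epsb)\pot_{s_i}(w)$ for both endpoints $w$ of that edge; summing the two endpoint bounds gives $S_{i+1} < \rho\, S_i$ with $\rho := \frac{(1+\epsb)\mu p_0}{1-\eta} < 1$ by~\eqref{eq:choiceeps}. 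Hence $S_i < \rho^{i-1} S_1$, and Lemma~\ref{lem:pot_single}(i) produces $\diff{s_i} \leq \inc{s_i} \leq (\mu-1)\rho^{i-1}S_1$.

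With the geometric bound in hand I would split into three cases. If $\tmin = t_3$, there are exactly $k = \numcrit$ critical turns, and the choice $\numcrit \geq \frac{1 - \log(1-\epsa)}{\log(1/\rho)}$ makes $\rho^{\numcrit-1}$ small enough that the cumulative increase $(\mu-1)S_1 \sum_{i=1}^{\numcrit}\rho^{i-1}$ is absorbed by the accumulated $\restdiff{t}$ over the $\numcrit$ critical and the interspersed non-critical turns (recall $\mu - 1 = o(1)$ as $n \to \infty$). If $\tmin = t_1$, the enforced loss $\pot_{t_0-1}(u) - \pot_{t_1}(u) \geq \epsa\pot_{t_0-1}(u)$ on the high-potential endpoint $u$ produces a large direct decrease on a single node, while the $t_2$-condition $\pot_t(w) < (1+\epsb)\pot_{t_0}(w)$ for every other node $w$ and every $t \leq t_1$ caps the compensating gains. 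If $\tmin = t_2$, the same $t_2$-bound controls $\Pot_t$ throughout $[t_0,t_2-1]$, and the single remaining turn $t_2$ contributes at most the geometric estimate on $\diff{s_k}$ (or $0$ if $t_2$ is non-critical), which closes the inequality.

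The main obstacle is the precise matching of constants in the $t_3$-case: $\numcrit$, $\epsa$, $\epsb$ and $\eta$ have to be threaded so that the geometric-sum bound on the increase side fits under the total $\eta\decfree{t}+\dectails{t}+\ott{t}+\deczero{t}$ contributions on the decrease side. A related subtlety arises in the $t_1$-case, where converting a drop in $\pot(u)$ accumulated across the interval $[t_0,t_1]$ back into a lower bound on $\restdiff{t}$ (via the piecewise decomposition of Definition~\ref{def:crit}) requires applying Lemma~\ref{lem:first_increase} in each intermediate turn and summing carefully, rather than at a single turn.
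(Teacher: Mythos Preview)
Your telescoping setup and the geometric decay of $S_i$ across critical turns are exactly the paper's Lemma~\ref{lem:crit_stacking} and Lemma~\ref{lem:crit_increase}, and your $t_1$-case sketch is on the right track. But the $t_3$- and $t_2$-cases contain a genuine gap.

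In the $\tmin=t_3$ case you assert that the cumulative critical increase ``is absorbed by the accumulated $\restdiff{t}$'', but you give no lower bound on $\bigrest=\sum\restdiff{s}$. Knowing that the increase side is $o\!\big(\pot_{t_0-1}(u)\big)$ is useless unless you can show $\bigrest$ is at least that large, and nothing in your argument forces $\ott{s}$, $\dectails{s}$, $\decfree{s}$ or $\deczero{s}$ to be anything other than negligible when $t_3<\min(t_1,t_2)$. The paper does \emph{not} bound $\bigrest$ here; instead it proves (Lemma~\ref{lem:t3}) that $t_3<\min(t_1,t_2)$ is impossible: after $\numcrit$ critical turns the geometric decay forces every unclaimed edge to have potential below $(1-\epsa)\pot_{t_0-1}(u)$, yet $t_3<t_1$ keeps $\pot(u)>(1-\epsa)\pot_{t_0-1}(u)$, so $u$ has no unclaimed edges left, whence $\pot(u)=0$, contradicting $t_3<t_1$. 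This eliminates the standalone $t_3$-case entirely; the choice of $\numcrit$ is calibrated for precisely this contradiction, not for ``absorbing'' an increase.

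Your $\tmin=t_2$ case is likewise incomplete. Saying ``the $t_2$-bound controls $\Pot_t$ throughout $[t_0,t_2-1]$'' gives at best $\Pot_{t_2-1}\le(1+\epsb)\Pot_{t_0}\le(1+\epsb)\mu\,\Pot_{t_0-1}$, which overshoots $\Pot_{t_0-1}$; adding one more turn's $\diff{s_k}$ does not close the gap. The paper's argument (Lemma~\ref{lem:t2}) is quite different: the jump $\pot_{\tmin}(w)\ge(1+\epsb)\pot_{s_0}(w)$ forces, via Lemma~\ref{lem:first_increase}, at least $x=\q\ln(1+\epsb)/(p_0\ln\mu)$ free edges to have been played between $s_0$ and $\tmin$; since additionally $t_2<t_1$, the potential of $u$ stays above $(1-\epsa)\pot_{t_0-1}(u)$ throughout, so every free edge has at least that potential, and Lemma~\ref{lem:pot_single}(ii) then yields a concrete lower bound $\bigrest\ge\eta(1-\epsa)(1-(1+\epsb)^{-1/p_0})\pot_{t_0-1}(u)$. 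You are missing both the free-edge count from the $(1+\epsb)$-jump and the use of $t_2<t_1$ to lower-bound the potential of each such edge.
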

Since the proof is quite involved, it is split into several parts.
We start with an observation, that between the turns $t_0$ and $t_2$
the total potential will not exceed $2n$.
\begin{observation}\label{obs:auxobs}
 If $n$ is sufficiently large, 
 for every turn $t$ with $t_0\leq t<t_2$ it holds $\Pot_t<2n$.
\end{observation}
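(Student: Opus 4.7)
The plan is to split the range $[t_0,t_2)$ into the single base turn $t=t_0$ and the subsequent turns $t_0<t<t_2$, handling each with a separate argument.

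For $t=t_0$, the only active hypothesis is $\Pot_{t_0-1}\leq n$. Let $e_M=\{u,v\}$ be \mak's edge in turn $t_0$. By Lemma~\ref{lem:pot_single}(i), \mak's move raises the total potential by at most $(\mu-1)\pot_{t_0-1}(e_M)$, and since $\pot_{t_0-1}(e_M)=\pot_{t_0-1}(u)+\pot_{t_0-1}(v)\leq\Pot_{t_0-1}\leq n$, discarding \bre's subsequent decreases only weakens the estimate and yields $\Pot_{t_0}\leq\mu n$. For $n$ sufficiently large, $\mu<2$, hence $\Pot_{t_0}<2n$.

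For $t_0<t<t_2$, I would exploit the minimality of $t_2$: for every $w\in V$ and every $s$ with $t_0\leq s<t$ it holds $\pot_t(w)<(1+\epsb)\pot_s(w)$. Fixing $s:=t_0$ and summing over $w\in V$ yields $\Pot_t<(1+\epsb)\Pot_{t_0}\leq(1+\epsb)\mu n$. It remains to check $(1+\epsb)\mu<2$: by the defining inequality~\eqref{eq:choiceeps}, $(1+\epsb)\mu p_0<1-\eta<1$, so $(1+\epsb)\mu<1/p_0$; by Remark~\ref{rem:p0} combined with the standing assumption $\beta\leq 4$, $p_0>8/(3\beta)\geq 2/3$, hence $(1+\epsb)\mu<3/2<2$.

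The main subtlety will be the choice of reference turn $s$ when invoking the minimality of $t_2$. A bound of the form $\pot_t(w)<(1+\epsb)\pot_{t-1}(w)$ between consecutive turns would accumulate telescopically to $(1+\epsb)^{t-t_0}$, which is useless without further control of $t-t_0$. Anchoring $s=t_0$ uniformly over the whole range $(t_0,t_2)$ is what keeps the blow-up a single constant factor $(1+\epsb)$. Everything else is arithmetic, relying on~\eqref{eq:choiceeps} and the crude but sufficient lower bound $p_0>2/3$ supplied by Remark~\ref{rem:p0} under $\beta\leq 4$.
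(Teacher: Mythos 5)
Your proof is correct and follows essentially the same approach as the paper: anchor the comparison at $s=t_0$, use the minimality of $t_2$ to pick up a single factor $(1+\epsb)$, and bound $\Pot_{t_0}\leq\mu\Pot_{t_0-1}\leq\mu n$ so that the constants multiply out to something below $2$. The only cosmetic differences are that you obtain $\Pot_{t_0}\leq\mu n$ via Lemma~\ref{lem:pot_single}(i) (discarding all of \bre's decreases) whereas the paper invokes Lemma~\ref{lem:first_increase}(ii), and you state the needed $\beta\leq 4$, $p_0>2/3$ argument explicitly where the paper leaves it implicit; both variants yield the same final arithmetic.
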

\begin{proof}
 Because $t<t_2$, for every $v\in V$ it holds $\pot_t(v)\leq(1+\epsb)\pot_{t_0}(v)$
  by definition of $t_2$.
  This implies
  \[\Pot_t=\sum_{v\in V}\pot_t(v)\leq\sum_{v\in V}(1+\epsb)\pot_{t_0}(v)
    =(1+\epsilon)\Pot_{t_0}.\]
  By Lemma~\ref{lem:first_increase}~(ii) we have
  \begin{align*}
    \Pot_{t_0}
    &=\Pot_{t_0}-\Pot_{t_0-1}+\Pot_{t_0-1}
      \leq \inc{t_0}-\utt{t_0}+\Pot_{t_0-1}\\
    &\leq \mu^{p_0\free(t_0)/\q}\Pot_{t_0-1}
      \leq \mu\Pot_{t_0-1},
  \end{align*}
  so finally,
  \[\Pot_t\leq(1+\epsilon)\Pot_{t_0}\leq\mu(1+\epsb)\Pot_{t_0-1}\leq\mu(1+\epsb)n<\tfrac{3}{2}\mu n.\]
  For sufficiently large $n$ we have $\mu<\tfrac{4}{3}$ and the proof is complete.
\end{proof}

In the following we assume that the game is not ended
before turn $\tmin$ is reached.
In the next lemma we further refine the characterization of critical turns
from Lemma~\ref{lem:critical_turns}.
We only consider turns between $t_0$ and $t_2$
and prove that the number of critical turns in this interval
affects the maximum possible potential of unclaimed edges exponentially.
\begin{lemma}\label{lem:crit_stacking}
  Let $s$ be a turn with $t_0\leq s\leq \tmin$ and $s< t_2$.
  Let $\crit(s)\in[\numcrit]$ be the number of critical turns
  between $t_0$ and $s$ (including $t_0$ and $s$).
  Then, for every edge $e$ unclaimed after turn $s$ it holds
  \[\pot_s(e)<\left(\frac{(1+\epsb)\mu p_0}{(1-\eta)}\right)^{\crit(s)}2\pot_{t_0-1}(u).\]
\end{lemma}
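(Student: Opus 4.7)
The plan is to prove the lemma by induction on $k=\crit(s)$. Each critical turn in $[t_0,s]$ will be absorbed through one application of Lemma~\ref{lem:critical_turns} (contributing a factor $\tfrac{\mu p_0}{1-\eta}$), and each non-critical stretch between consecutive critical turns will cost at most an additional factor $1+\epsb$, extracted from the defining property of $t_2$: for any turn $t<t_2$ and any $r$ with $t_0\leq r\leq t$, every node satisfies $\pot_t(w)<(1+\epsb)\pot_r(w)$, so summing over the two endpoints of an unclaimed edge $e$ gives $\pot_t(e)<(1+\epsb)\pot_r(e)$. In every invocation of Lemma~\ref{lem:critical_turns} I will need $\free\geq 2$ at the critical turn in question; this is available because $\Pot_t<2n$ for $t<t_0$ (part of the setup of $t_0$) and for $t_0\leq t<t_2$ (Observation~\ref{obs:auxobs}), so Observation~\ref{obs:freeedges} rules out $\free\leq 1$.

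For the base case $k=1$ the only critical turn in $[t_0,s]$ is $t_0$ itself. Lemma~\ref{lem:critical_turns} applied at $t_0$ yields $\pot_{t_0}(e)<\tfrac{\mu p_0}{1-\eta}\pot_{t_0-1}(e_0)\leq\tfrac{\mu p_0}{1-\eta}\cdot 2\pot_{t_0-1}(u)$, using the w.l.o.g.\ assumption $\pot_{t_0-1}(u)\geq\pot_{t_0-1}(v)$. If $s=t_0$ this already gives the claim; otherwise, applying the $t_2$ property to each endpoint of $e$ between $t_0$ and $s$ multiplies the bound by $1+\epsb$ and finishes the base case.

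For the inductive step $k\geq 2$, let $s_k$ be the largest critical turn in $[t_0,s]$. Since $s_k>t_0$, Maker's edge in turn $s_k$ is still unclaimed after $s_k-1$, and the induction hypothesis applied at $s_k-1$ (where $\crit(s_k-1)=k-1$) bounds its potential by $\bigl(\tfrac{(1+\epsb)\mu p_0}{1-\eta}\bigr)^{k-1}\cdot 2\pot_{t_0-1}(u)$. Feeding this into Lemma~\ref{lem:critical_turns} at $s_k$ gives $\pot_{s_k}(e)<\tfrac{\mu p_0}{1-\eta}\cdot\bigl(\tfrac{(1+\epsb)\mu p_0}{1-\eta}\bigr)^{k-1}\cdot 2\pot_{t_0-1}(u)$, which is strictly below the claimed bound by a factor $1+\epsb$ and hence settles the case $s=s_k$. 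For $s>s_k$, the interval $(s_k,s]$ contains no critical turn, so the $t_2$ property multiplies this estimate by one more factor $1+\epsb$, producing exactly $\bigl(\tfrac{(1+\epsb)\mu p_0}{1-\eta}\bigr)^k\cdot 2\pot_{t_0-1}(u)$.

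The main obstacle is the bookkeeping of this last step: at each critical turn one must feed the \emph{tighter} bound from Lemma~\ref{lem:critical_turns} (factor $\tfrac{\mu p_0}{1-\eta}$) into the next level of the induction, rather than the looser bound $\tfrac{(1+\epsb)\mu p_0}{1-\eta}$ that appears in the induction hypothesis. The slack $\tfrac{1}{1+\epsb}$ obtained at every critical step is precisely what pays for the inflation $(1+\epsb)$ incurred on the subsequent non-critical stretch; if one tried to induct only on the weaker bound, the two-case analysis ($s=s_k$ versus $s>s_k$) would not close up.
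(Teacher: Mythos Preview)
Your proof is correct and follows essentially the same induction as the paper: apply Lemma~\ref{lem:critical_turns} at the last critical turn $s_k\leq s$, use the induction hypothesis at $s_k-1$, and bridge the non-critical stretch $(s_k,s]$ with the defining property of $t_2$; the verification of $\free(s_k)\geq 2$ via Observations~\ref{obs:freeedges} and~\ref{obs:auxobs} is also exactly what the paper does. The only cosmetic difference is that the paper applies the $(1+\epsb)$ factor uniformly (writing $\pot_s(e)\leq(1+\epsb)\pot_{s'}(e)$ even when $s=s'$, where it is trivial), so it avoids your case split $s=s_k$ versus $s>s_k$; your closing remarks about needing the ``tighter'' bound to make the two-case analysis close are therefore a little overstated, but the mathematics is sound.
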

\begin{proof}
  Via induction over $\crit(s)$.
  \par
  Let $\crit(s)=1$. Recall that $e_0=\{u,v\}$ is the edge claimed by \mak\ in turn $t_0$
  and that $\pot_{t_0-1}(u)\geq\pot_{t_0-1}(v)$.
  Let $e=\{x,y\}$ be an edge unclaimed after turn $s$.
  Because $s<t_2$,
  we know that
  \[\pot_s(e)=\pot_s(x)+\pot_s(y)\leq(1+\epsb)\pot_{t_0}(x)+(1+\epsb)\pot_{t_0}(y)=(1+\epsb)\pot_{t_0}(e)\]
  and because $\free(t_0)\geq 2$, by Lemma~\ref{lem:critical_turns}
  \[(1+\epsb)\pot_{t_0}(e)
    <(1+\epsb)\frac{\mu p_0}{(1-\eta)}\pot_{t_0-1}(e_0)
    \leq\frac{(1+\epsb)\mu p_0}{(1-\eta)}2\pot_{t_0-1}(u).\]
  Now let the claim be true for all $s'$ with
  $\crit(s')=i,i\in[\numcrit-1]$.
  Let $s$ be a turn with $\crit(s)=i+1$.
  Let $s'$ be the last critical turn before $s$
  (if $s$ is critical, let $s'=s$).
  Then $\crit(s'-1)=i$.
  Let $e_M$ be the edge claimed by \mak\ in turn $s'$.
  We get
  \begin{align*}
    \pot_s(e)
    &\leq (1+\epsb)\pot_{s'}(e)\tag{$t<t_2$}\\
    &\leq(1+\epsb)\frac{\mu p_0}{(1-\eta)}\pot_{s'-1}(e_M)
      \tag{Lemma~\ref{lem:critical_turns}}\\
    &\leq(1+\epsb)\frac{\mu p_0}{(1-\eta)}\left(\frac{(1+\epsb)\mu p_0}{1-\eta}\right)^i2\pot_{t_0-1}(u)\tag{IH}\\
    &=\left(\frac{(1+\epsb)\mu p_0}{1-\eta}\right)^{i+1}2\pot_{t_0-1}(u).
  \end{align*}
  Note that for the above application of Lemma~\ref{lem:critical_turns}, 
  we need to ensure that $\free(s')\geq 2$.
  Due to Observation~\ref{obs:freeedges}, it suffices to show that
  $\Pot_t<2n$ for all $t<s'$.
  By choice of $t_0$, we already know that $\Pot_t<2n$ for all $t<t_0$
  and because $s'\leq s<t_2$, for all $t_0\leq t<s'$
  we can apply Observation~\ref{obs:auxobs} and get 
  $\Pot_t<2n$.
\end{proof}

\begin{lemma}\label{lem:crit_increase}
  For every $\epsc>0$, if $n$ is sufficiently big, we have
  \[\sum_{\substack{t_0\leq s\leq \tmin\\s \textnormal{ critical}}}\inc{s}
    \leq 2\numcrit(\mu-1)\pot_{t_0-1}(u)<\epsc\pot_{t_0-1}(u).\]
\end{lemma}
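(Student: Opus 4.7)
\emph{Proposed proof.} The plan is to establish the two inequalities separately. The first combines the per-edge potential bound of Lemma~\ref{lem:pot_single}(i) with the exponentially-decaying bound of Lemma~\ref{lem:crit_stacking} on the Maker-edge used in each critical turn. The second is a routine asymptotic estimate based on the definitions of $\mu$ and $\numcrit$.

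For the first inequality, denote the Maker-edge of turn $s$ by $e_M^{(s)}$ and set $\rho:=\tfrac{(1+\epsb)\mu p_0}{1-\eta}$, which satisfies $\rho<1$ by the choice of $\epsb$ in~\eqref{eq:choiceeps}. Lemma~\ref{lem:pot_single}(i) gives $\inc{s}\leq (\mu-1)\pot_{s-1}(e_M^{(s)})$ in every turn. For $s=t_0$ the Maker-edge is $e_0=\{u,v\}$, and the convention $\pot_{t_0-1}(u)\geq\pot_{t_0-1}(v)$ yields $\pot_{t_0-1}(e_0)\leq 2\pot_{t_0-1}(u)$. For every critical $s$ with $t_0<s\leq\tmin$ the edge $e_M^{(s)}$ is still unclaimed after turn $s-1$, so applying Lemma~\ref{lem:crit_stacking} at the turn $s-1$ (whose hypotheses $t_0\leq s-1\leq\tmin$ and $s-1<t_2$ follow from $s>t_0$ and $s\leq\tmin\leq t_2$) gives $\pot_{s-1}(e_M^{(s)})<\rho^{\crit(s-1)}\cdot 2\pot_{t_0-1}(u)$. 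Since there are at most $\numcrit$ critical turns in $[t_0,\tmin]$ by the definition of $t_3$, summing the resulting geometric series with ratio $\rho<1$ yields $\sum_{k=0}^{\numcrit-1}\rho^k\leq\numcrit$, producing the upper bound $2\numcrit(\mu-1)\pot_{t_0-1}(u)$.

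For the second inequality, $\mu-1=\tfrac{\lam\ln n}{\epsd\q}=\Theta(\ln n/\sqrt n)$ tends to $0$ as $n\to\infty$, while $\numcrit$ stays bounded: its denominator $\log(1-\eta)-\log(1+\epsb)-\log(\mu p_0)$ converges to the positive constant $\log\tfrac{1-\eta}{(1+\epsb)p_0}>0$ (using $\mu p_0\to p_0<1$ by Remark~\ref{rem:p0}). Hence $2\numcrit(\mu-1)\to 0$, so it lies below any prescribed $\epsc>0$ once $n$ is sufficiently large.

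The main obstacle is the bookkeeping: Lemma~\ref{lem:crit_stacking} must be applied at the \emph{previous} turn $s-1$ rather than at $s$ itself (since Maker picks $e_M^{(s)}$ from edges unclaimed after turn $s-1$), and the base case $s=t_0$ must be handled directly because $\crit(t_0-1)=0$ would give a vacuous exponent. Once these indexing details are settled, the remainder is a direct geometric-series estimate combined with the observation that $\mu\to 1$ faster than $\numcrit$ can grow.
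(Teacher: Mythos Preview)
Your proof is correct and follows essentially the same route as the paper: apply Lemma~\ref{lem:pot_single}(i) to each critical turn, handle $s=t_0$ directly, invoke Lemma~\ref{lem:crit_stacking} at $s-1$ for later critical turns, and then use $\mu-1\to 0$ with $\numcrit$ bounded. The only cosmetic difference is that you phrase the summation as a geometric series $\sum_{k=0}^{\numcrit-1}\rho^k\le\numcrit$, whereas the paper simply bounds each factor $\rho^{\crit(s-1)}$ by~$1$ term-by-term; your verification of the hypotheses $t_0\le s-1\le\tmin$ and $s-1<t_2$ for Lemma~\ref{lem:crit_stacking} is in fact slightly more careful than the paper's.
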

\begin{proof}
  Let $\epsc>0$.
  First note that due to Lemma~\ref{lem:pot_single}~(i)
  \begin{equation}
    \inc{t_0}\leq (\mu-1)\pot_{t_0-1}(e_0)\leq 2(\mu-1)\pot_{t_0-1}(u).
  \end{equation}
  
  Now let $s$ be a critical turn with $t_0<s\leq \tmin$.
  Let $e_M$ be the edge claimed by \mak\ in this turn. 
  We get
  \begin{align*}
    \inc{s}
    &\leq(\mu-1)\pot_{s-1}(e_M)\tag{Lemma~\ref{lem:pot_single}~(i)}\\
    &<(\mu-1)\left(\frac{(1+\epsb)\mu p_0}{(1-\eta)}\right)^{\crit(s-1)}2\pot_{t_0-1}(u)\tag{Lemma~\ref{lem:crit_stacking}}\\
    &\overset{\eqref{eq:choiceeps}}{\leq} (\mu-1)2\pot_{t_0-1}(u).
  \end{align*}
  So for every critical turn $s$ with $t_0\leq s\leq \tmin$ we have
  \begin{equation}
    \inc{s}\leq 2(\mu-1)\pot_{t_0-1}(u).
  \end{equation}
  Because $t\leq t_3$, there are at most $\numcrit$
  critical turns between $t_0$ and $\tmin$,
  so finally we get
  \[\sum_{\substack{t_0\leq s\leq \tmin\\s \textnormal{ critical}}}\inc{s}
    \leq\sum_{\substack{t_0\leq s\leq \tmin\\s \textnormal{ critical}}}2(\mu-1)\pot_{t_0-1}(u)
    \leq 2\numcrit (\mu-1)\pot_{t_0-1}(u).\]
  Recall that
  $(\mu-1)=6\ln(n)\beta/\epsd \q=6\ln(n)\sqrt{\beta}/\epsd \sqrt{n}
  \overset{n\rightarrow\infty}{\longrightarrow} 0$,
  whereas $\numcrit$ is bounded by a constant. 
  So for $n$ sufficiently big, the whole term is smaller than
  $\epsc\pot_{t_0-1}(u)$.
\end{proof}
By definition, $t^*$ always has one of the three values $t_1,t_2,t_3$.
In the following three lemmas we consider all possible cases.
These lemmas combined directly imply Theorem~\ref{thm:no_increase}.
We always assume $n$ to be sufficiently big if needed.
\begin{lemma}\label{lem:t1}
  If $t_1\leq t_2$ and $t_1\leq t_3$, then $\Pot_{t_0-1}\geq \Pot_\tmin$.
\end{lemma}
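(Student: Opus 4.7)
The plan is to express $\Pot_{t_1}-\Pot_{t_0-1}$ as the telescoping sum $\sum_{s=t_0}^{t_1}(\critdiff{s}-\restdiff{s})$ and to upper-bound this quantity by $0$. The two ingredients are a ceiling on the net positive contributions (coming only from critical turns) and a floor on $\sum_s\restdiff{s}$ forced by the drop of $\pot(u)$ that defines~$t_1$.

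For the ceiling, observe that non-critical turns satisfy $\critdiff{s}\leq 0$ by definition, so only critical turns can push the sum upward; and a critical turn always satisfies $\critdiff{s}\leq\inc{s}$. Since $t_1\leq t_3$, at most $\numcrit$ critical turns lie in $[t_0,t_1]$, and Lemma~\ref{lem:crit_increase} then gives, for any prescribed $\epsc>0$ and $n$ sufficiently large,
\[
\sum_{s=t_0}^{t_1}\critdiff{s}\ \leq\ \sum_{\substack{t_0\leq s\leq t_1\\s\text{ critical}}}\inc{s}\ \leq\ \epsc\,\pot_{t_0-1}(u).
\]
For the floor, I use the defining property of $t_1$: since $\pot_{t_1}(u)\leq(1-\epsa)\pot_{t_0-1}(u)$, the cumulative drop at $u$ satisfies $\sum_s(\dec{s}(u)-\inc{s}(u))\geq\epsa\,\pot_{t_0-1}(u)$. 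Unfolding $\dec{s}(u)=\utt{s}(u)+\ott{s}(u)+\dectails{s}(u)+\decfree{s}(u)+\deczero{s}(u)$ and using $\utt{s}(u)\leq\inc{s}(u)$ termwise gives
\[
\dec{s}(u)-\inc{s}(u)\ \leq\ \restdiff{s}(u)+(1-\eta)\decfree{s}(u),
\]
so that, summing and using $\restdiff{s}(u)\leq\restdiff{s}$,
\[
\sum_{s=t_0}^{t_1}\restdiff{s}\ \geq\ \epsa\,\pot_{t_0-1}(u)-(1-\eta)\sum_{s=t_0}^{t_1}\decfree{s}(u).
\]
Combining the two bounds yields $\Pot_{t_1}-\Pot_{t_0-1}\leq(\epsc-\epsa)\pot_{t_0-1}(u)+(1-\eta)\sum_s\decfree{s}(u)$.

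Choosing $\epsc<\epsa$ makes the first summand strictly negative; the main obstacle is then absorbing the residual $(1-\eta)\sum_s\decfree{s}(u)$ into this negative budget. Because $t_1\leq t_2$, Observation~\ref{obs:auxobs} together with the $t_2$-condition bounds $\pot_{s-1}(u)\leq(1+\epsb)\mu\,\pot_{t_0-1}(u)$ at every relevant turn, while each free edge at $u$ drops its potential by at most $(1-\mu^{-1/\q})\pot_{s-1}(u)$. Combining this per-turn estimate with the parameter relation $1-\eta>\mu p_0$ enforced in Section~\ref{sec:crit_turns} and the $\numcrit$-bounded number of critical turns in $[t_0,t_1]$, one can tune $\epsa,\epsb,\epsc,\eta$ so that the residual is strictly dominated by $(\epsa-\epsc)\pot_{t_0-1}(u)$, giving $\Pot_{t_1}\leq\Pot_{t_0-1}$. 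This final bookkeeping, which must simultaneously respect the constraints tying $\eta$ to $\mu p_0$ (from $\numcrit$) and $\epsa$ to the $t_1$-threshold, is the delicate part of the argument.
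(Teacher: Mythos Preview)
Your telescoping decomposition and the ceiling via Lemma~\ref{lem:crit_increase} match the paper. The gap is in the final absorption step. The residual $(1-\eta)\sum_{s}\decfree{s}(u)$ cannot be controlled by the tools you invoke: bounding the per-edge drop by $(1-\mu^{-1/\q})\pot_{s-1}(u)$ and the number of critical turns by $\numcrit$ says nothing about the \emph{number} of free edges incident to $u$ over $[t_0,t_1]$, since those edges may be placed in non-critical turns as well. Because $\pot(u)$ stays above $(1-\epsa)\pot_{t_0-1}(u)$ throughout the interval, roughly $\q$ free edges at $u$ (each contributing a factor $\mu^{-1/\q}$) can be absorbed for every \mak-edge at $u$ (restoring a factor $\mu$), and \mak\ may add up to order $\q$ edges at $u$. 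The resulting worst-case bound on $\sum_s\decfree{s}(u)$ is of order $\q(\mu-1)\pot_{t_0-1}(u)=\Theta(\ln n)\,\pot_{t_0-1}(u)$, which no constant choice of $\epsa,\epsb,\epsc,\eta$ can absorb into $(\epsa-\epsc)\pot_{t_0-1}(u)$. (Note also that $\epsa,\epsb,\eta$ are fixed once in Sections~\ref{sec:crit_turns} and~\ref{sec:increase}; only $\epsc$ is free inside this lemma.)

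The paper avoids the residual entirely by a one-line trick: multiply the telescoped drop $\sum_s(\dec{s}(u)-\inc{s}(u))\geq\epsa\,\pot_{t_0-1}(u)$ by $\eta$ \emph{before} expanding $\dec{s}(u)$. Then $\decfree{s}(u)$ already carries the coefficient $\eta$ needed to match the definition of $\restdiff{s}$, while on the remaining nonnegative terms $\ott{s}(u),\dectails{s}(u),\deczero{s}(u)$ dropping the prefactor $\eta<1$ only helps. This yields $\bigrest\geq\eta\epsa\,\pot_{t_0-1}(u)$ directly, with nothing left to absorb; choosing $\epsc<\eta\epsa$ then finishes the proof.
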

\begin{proof}
  Let $\epsc\in(0,\eta\epsa)$.
  By assumption $\tmin=\min(t_1,t_2,t_3)=t_1$ and hence, 
  by definition of $t_1$ we have $\pot_{\tmin}(u)\leq(1-\epsa)\pot_{t_0-1}(u)$.
  Let $\bigrest:=\sum\limits_{t_0\leq s\leq \tmin}\diffrest{s}$. Then,
  \begin{align*}
    \Pot_{\tmin}-\Pot_{t_0-1}
    &=\sum_{\mathclap {t_0\leq s\leq t^*}}\Pot_{s}-\Pot_{s-1}
    =\sum_{\mathclap{t_0\leq s\leq t^*}}\diff{s}-\diffrest{s}\\
    &=\sum_{\mathclap{\substack{t_0\leq s\leq \tmin\\s \textnormal{ critical}}}}\diff{s}
    +\underbrace{\sum_{\substack{t_0\leq s\leq \tmin\\s \textnormal{ non-critical}}}\hspace{-0.5cm}\diff{s}}_{\leq 0}-\bigrest\\
    &\leq \sum_{\mathclap{\substack{t_0\leq s\leq \tmin\\s \textnormal{ critical}}}}\diff{s}-\bigrest\\
    &\leq \epsc\pot_{t_0-1}(u)-\bigrest,\tag{Lemma~\ref{lem:crit_increase}}
  \end{align*}
  hence it suffices to show that $\bigrest\geq \epsc\pot_{t_0-1}(u)$.
  We have
  \begin{align*}
    &\epsc\pot_{t_0-1}(u)\\
    &\leq\eta\epsa\pot_{t_0-1}(u)\\
    &\leq\eta(\pot_{t_0-1}(u)-\pot_\tmin(u))\tag{$\tmin=t_1$}\\
    &=\eta\left(\sum_{t_0\leq s\leq \tmin}\dec{s}(u)-\inc{s}(u)\right)\\
    &=\eta\left(\sum_{t_0\leq s\leq \tmin}\utt{s}(u)+\ott{s}(u)
      +\dectails{s}(u)+\decfree{s}(u)+\deczero{s}(u)-\inc{s}(u)\right)\\
    &\leq\eta\left(\sum_{t_0\leq s\leq \tmin}\ott{s}(u)
      +\dectails{s}(u)+\decfree{s}(u)+\deczero{s}(u)\right)
      \tag{$\utt{s} (u)\leq\inc{s} (u)$}\\
    &\leq\sum_{t_0\leq s\leq \tmin}\ott{s}(u)+\dectails{s}(u)+\eta\decfree{s}(u)+\deczero{s}(u)\\
    &\leq\sum_{t_0\leq s\leq \tmin}\restdiff{s}
      =\bigrest.
  \end{align*}
\end{proof}

\begin{lemma}\label{lem:t2}
  If $t_2< t_1$ and $t_2\leq t_3$, then $\Pot_{t_0-1}\geq \Pot_\tmin$.
\end{lemma}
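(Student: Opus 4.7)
The plan is to mirror the structure of the proof of Lemma~\ref{lem:t1}. As there, we telescope
\[\Pot_{\tmin} - \Pot_{t_0-1} = \sum_{t_0\leq s\leq\tmin}\bigl(\diff{s} - \diffrest{s}\bigr),\]
discard the non-critical turns (which contribute $\diff{s}\leq 0$) and invoke Lemma~\ref{lem:crit_increase} to bound the critical part by $\epsc\pot_{t_0-1}(u)$ for any prescribed $\epsc>0$ once $n$ is large enough. This reduces the lemma to showing $\bigrest:=\sum_{t_0\leq s\leq\tmin}\diffrest{s}\geq\epsc\pot_{t_0-1}(u)$ for a suitable $\epsc$.

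The new ingredient replacing the $\pot(u)$-drop used in Lemma~\ref{lem:t1} is the $t_2$-hypothesis: there exist a node $w$ and a turn $s^*\in[t_0,\tmin-1]$ with $\pot_{\tmin}(w)\geq(1+\epsb)\pot_{s^*}(w)$, and minimality of $t_2$ forces $\pot_t(w)<(1+\epsb)\pot_{s^*}(w)$ throughout $t\in[s^*,\tmin-1]$. Telescoping $w$'s potential along $(s^*,\tmin]$ yields $\sum_{s^*<s\leq\tmin}(\inc{s}(w)-\dec{s}(w))\geq\epsb\pot_{s^*}(w)$, while Lemma~\ref{lem:first_increase}(i) provides the companion bound $\inc{s}(w)-\utt{s}(w)\leq(\mu^{p_0\free(s)/\q}-1)\pot_{s-1}(w)$, with $\pot_{s-1}(w)$ controlled by $(1+\epsb)\pot_{s^*}(w)$ for $s\leq\tmin$.

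The principal obstacle is the sign issue: in Lemma~\ref{lem:t1} the decrease of $\pot(u)$ fed directly into $\bigrest$ via a clean telescope $\eta(\pot_{t_0-1}(u)-\pot_{\tmin}(u))\leq\bigrest$, whereas here $\pot(w)$ \emph{grows}, so the node-$w$ telescope has the wrong sign. To produce a lower bound on $\bigrest$ I plan to redirect the argument through the Maker moves at $w$: each Maker edge $\{w,x\}$ played in $(s^*,\tmin]$ triggers $\deg_M(x)$ closing edges whose tails at nodes other than $w$ contribute to $\dectails{s}$, and any leftover Breaker budget creates free edges whose decrease contributes to $\decfree{s}$ via Lemma~\ref{lem:pot_single}(ii). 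Combining the edge-potential bound from Lemma~\ref{lem:crit_stacking} applied at $\tmin-1<t_2$ with Lemma~\ref{lem:first_increase}(i) summed over $(s^*,\tmin]$ should yield $\bigrest\geq\epsc\pot_{t_0-1}(u)$ for a sufficiently small $\epsc>0$ depending on $\epsb,\eta,p_0,\mu$, chosen in analogy with $\epsc\in(0,\eta\epsa)$ from Lemma~\ref{lem:t1}. I expect this final quantitative bookkeeping---in particular, cleanly converting a growth in $\pot(w)$ into decreases at unrelated nodes---to be the main technical hurdle.
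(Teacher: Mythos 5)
Your setup is correct and matches the paper's opening moves: telescope $\Pot_\tmin-\Pot_{t_0-1}$ exactly as in Lemma~\ref{lem:t1}, discard non-critical $\diff{s}\leq 0$, invoke Lemma~\ref{lem:crit_increase} to dominate $\sum\diff{s}$ by $\epsc\pot_{t_0-1}(u)$, and reduce to $\bigrest\geq\epsc\pot_{t_0-1}(u)$. You also correctly isolate the sign problem: the $t_2$-event gives a node $w$ whose potential \emph{rises}, so the telescope at $w$ has the wrong orientation and cannot feed $\bigrest$ directly.

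But the mechanism you propose to convert that rise into a lower bound on $\bigrest$ (routing through $\dectails{s}$ at the neighbors of $w$, plus Lemma~\ref{lem:crit_stacking}) is not a viable path and is not what the paper does; this is a genuine gap. The paper's key step has two parts that your sketch never reaches. First, iterating Lemma~\ref{lem:first_increase}(i) at $w$ over $s\in(s_0,\tmin]$ gives the product bound $\pot_{\tmin}(w)\leq\pot_{s_0}(w)\prod_{s_0<s\leq\tmin}\mu^{p_0\free(s)/\q}$; combining with $\pot_\tmin(w)\geq(1+\epsb)\pot_{s_0}(w)$ and taking logarithms yields $\sum_{s_0<s\leq\tmin}\free(s)\geq\q\ln(1+\epsb)/(p_0\ln\mu)=:x$ --- the growth of $\pot(w)$ is converted into a quantitative lower bound on the \emph{number} of free edges claimed. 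Second, $\tmin<t_1$ guarantees $\pot_t(u)\geq(1-\epsa)\pot_{t_0-1}(u)$ throughout $[t_0,\tmin]$, so some unclaimed edge at $u$ always has that much potential; Breaker's greedy choice of free edges then forces \emph{every} one of those $\geq x$ free edges to have potential $\geq(1-\epsa)\pot_{t_0-1}(u)$, and Lemma~\ref{lem:pot_single}(ii) plus Lemma~\ref{lem:tech1} turn this into $\bigrest\geq\eta(1-\epsa)(1-(1+\epsb)^{-1/p_0})\pot_{t_0-1}(u)$. Your proposed route via closing-edge tails at $w$'s neighbors has no such guarantee --- there is no reason those tails have large potential, and nothing forces Maker to concentrate at $w$ at all --- so the ``final quantitative bookkeeping'' you flag as the hurdle is in fact the whole missing argument.
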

\begin{proof}
  Let $\epsc>0$ with $\epsc\leq\eta(1-\epsa)(1-(1+\epsb)^{-1/p_0})$.
  We have $\tmin=t_2$, so there exists a turn $s_0$ with $t_0\leq s_0< \tmin$
  and a vertex $w\in V$,
  such that $\pot_t(w)\geq(1+\epsb)\pot_{s_0}(w)$.
  Because $\tmin<t_1$, the potential of $u$ was not set to $0$
  and as in the proof of Lemma~\ref{lem:t1}
  it suffices to show that $\bigrest\geq\epsc\pot_{t_0-1}(u)$.
  We start by showing that
  for all turns $t$ with $s_0\leq t \leq \tmin$ it holds
  \begin{equation}\label{prodabsch}
    \pot_{t}(w)\leq\pot_{s_0}(w)\prod_{s_0<s\leq t}\mu^{p_0\free(s)/\q}.
  \end{equation}
  We prove~\eqref{prodabsch} via induction over $t$.
  For $t=s_0$ the claim obviously holds.
  Now let $t>s_0$. Then, $t-1\geq s_0$
  and by Lemma~\ref{lem:first_increase}~(i) we have
  \begin{align*}
    \pot_{t}(w)-\pot_{t-1}(w)
      \leq\inc{t}(w)-\utt{t}(w)
      \leq\pot_{t-1}(w)(\mu^{p_0\free(t)/\q}-1),
  \end{align*}
  so
  \[\pot_{t}(w)\leq\pot_{t-1}(w)\mu^{p_0\free(t)/\q}.\]
  By applying the induction hypothesis we finish the proof of~\eqref{prodabsch}:
  \[\pot_{t}(w)\leq\left(\pot_{s_0}(w)\prod_{s_0<s\leq t-1}\mu^{p_0\free(s)/\q}
    \right)\mu^{p_0\free(t)/\q}
    =\pot_{s_0}(w)\prod_{s_0<s\leq t}\mu^{p_0\free(s)/\q}.\]
  Using~\eqref{prodabsch}, we get
  \[(1+\epsb)\pot_{s_0}(w)\leq\pot_\tmin(w)
      \leq\pot_{s_0}(w)\prod_{s_0<s\leq \tmin}\mu^{p_0\free(s)/\q},\]
      so
      \[
    (1+\epsb)\leq\prod_{s_0<s\leq \tmin}\mu^{p_0\free(s)/\q}
                     =\mu^{\left(p_0\sum_{s_0<s\leq \tmin}\free(s)\right)/\q}\]
                     which,taking the logarithm gives
    \[\sum_{s_0<s\leq \tmin}\free(s)
                     \geq\frac{\q\ln(1+\epsb)}{p_0\ln(\mu)}=:x,\]
  so at least $x$ free edges were claimed by \bre\
  between the turns $s_0$ and $\tmin$.
  Because $\tmin<t_1$, at the whole time from $t_0$ to $\tmin$
  the potential of $u$ is at least $(1-\epsa)\pot_{t_0-1}(u)$.
  Hence, during this time every unclaimed edge incident in $u$
  has a potential of at least $(1-\epsa)\pot_{t_0-1}(u)$,
  so especially every free edge
  claimed by \bre\ has at least this potential
  and, due to Lemma~\ref{lem:pot_single}~(ii),
  causes a decrease of the total potential
  of at least $(1-\epsa)\pot_{t_0-1}(u)(1-\mu^{-\frac{1}{\q}})$.
  Therefore, we get
  \begin{align*}
    \bigrest
    &\geq \eta\sum_{s_0<s\leq \tmin}\decfree{s}\\
    &\geq \eta x (1-\epsa)\pot_{t_0-1}(u)\left(1-\mu^{-\frac{1}{\q}}\right)\\
    &\geq \eta (1-\epsa)\pot_{t_0-1}(u)\left(1-\mu^{-\frac{x}{\q}}\right)
      \tag{Lemma~\ref{lem:tech1}}\\
    &\geq \eta (1-\epsa)\pot_{t_0-1}(u)\left(1-(1+\epsilon)^{-\frac{1}{p_0}}\right)\\
    &\geq \epsc \pot_{t_0-1}(u).
  \end{align*}
\end{proof}

\begin{lemma}\label{lem:t3}
  $t_3\geq\min(t_1,t_2)$.
\end{lemma}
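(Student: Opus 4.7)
The plan is to argue by contradiction: assume $t_3<\min(t_1,t_2)$, so that $\tmin=t_3<\infty$, and derive incompatible upper and lower bounds on the potential of a well-chosen unclaimed edge at turn $t_3$.

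For the lower bound, since $t_3<t_1$, the definition of $t_1$ gives $\pot_{t_3}(u)>(1-\epsa)\pot_{t_0-1}(u)$, and in particular $\pot_{t_3}(u)>0$ (using that $\pot_{t_0-1}(u)>0$, since $e_0=\{u,v\}$ was still unclaimed at the end of turn $t_0-1$). Hence $u$ has at least one unclaimed incident edge $e$ after turn $t_3$, and $\pot_{t_3}(e)\geq \pot_{t_3}(u)>(1-\epsa)\pot_{t_0-1}(u)$. For the upper bound, I would invoke Lemma~\ref{lem:crit_stacking} with $s=t_3$: its hypotheses $t_0\leq s\leq \tmin$ and $s<t_2$ hold by $t_3=\tmin$ and the contradiction assumption $t_3<t_2$. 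Since $t_3$ is by definition the $\numcrit$-th critical turn after $t_0-1$ (and $t_0$ itself is critical, so it counts as the first), we have $\crit(t_3)=\numcrit$, and the lemma yields
\[\pot_{t_3}(e)<R^{\numcrit}\cdot 2\pot_{t_0-1}(u),\qquad R:=\frac{(1+\epsb)\mu p_0}{1-\eta}<1,\]
where $R<1$ comes precisely from~\eqref{eq:choiceeps}.

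It then remains to see that the choice of $\numcrit$ forces $2R^{\numcrit}<1-\epsa$. Unpacking the ceiling in the definition of $\numcrit$ gives $\numcrit\cdot(-\log R)\geq 1-\log(1-\epsa)$, which rearranges to $R^{\numcrit}\leq (1-\epsa)/e$, so $2R^{\numcrit}\leq 2(1-\epsa)/e<1-\epsa$. Combining with the two bounds above yields $(1-\epsa)\pot_{t_0-1}(u)<2R^{\numcrit}\pot_{t_0-1}(u)<(1-\epsa)\pot_{t_0-1}(u)$, the desired contradiction.

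The main (and essentially only) obstacle is bookkeeping: verifying that $\crit(t_3)=\numcrit$ via the off-by-one in the definition of $t_3$, and confirming that the hypotheses of Lemma~\ref{lem:crit_stacking} are satisfied at $s=t_3$. Once these are nailed down, the contradiction just records the fact that $\numcrit$ was chosen large enough that the exponential shrinkage $R^{\numcrit}$ overwhelms both the factor $2$ coming from $\pot_{t_0-1}(e_0)\leq 2\pot_{t_0-1}(u)$ in Lemma~\ref{lem:crit_stacking} and the retention factor $1-\epsa$ guaranteed by $t_3<t_1$.
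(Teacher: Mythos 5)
Your proof is correct and follows essentially the same route as the paper: you invoke Lemma~\ref{lem:crit_stacking} at $s=t_3=\tmin$ to get the exponential upper bound $\pot_{t_3}(e)<2\left(\frac{(1+\epsb)\mu p_0}{1-\eta}\right)^{\numcrit}\pot_{t_0-1}(u)$, observe that $\numcrit$ was chosen precisely so this is below $(1-\epsa)\pot_{t_0-1}(u)$, and play it against the lower bound coming from $t_3<t_1$. The one stylistic difference is in the final step: the paper argues indirectly (no unclaimed edge incident to $u$ can survive turn $\tmin$, hence $u$ must have been isolated at some earlier turn, forcing $t_1\leq\tmin$), whereas you use $\pot_{t_3}(u)>0$ to produce an unclaimed edge $e$ at $u$ and then contradict the two numerical bounds on $\pot_{t_3}(e)$ directly — slightly cleaner, same substance.
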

\begin{proof}
  Let us assume that $t_3<\min(t_1,t_2)$.
  Then $\tmin=t_3$, so $\tmin$ is the $\numcrit$-th critical turn after $t_0-1$.
  We apply Lemma~\ref{lem:crit_stacking} to $s=\tmin<t_2$
  and obtain that for every unclaimed edge $e$ after turn $\tmin$ it holds
  \[\pot_\tmin(e)
    <\left(\frac{(1+\epsb)\mu p_0}{(1-\eta)}\right)^{\numcrit}2\pot_{t_0-1}(u)
    \leq (1-\epsa)\pot_{t_0-1}(u)\]
  by the choice of $\numcrit$.
  Since $\tmin<t_1$, we have $\pot_t(u)\geq(1-\epsa)\pot_{t_0-1}(u)$,
  so directly after turn $\tmin$, every unclaimed edge incident in $u$
  has a potential of at least $(1-\epsa)\pot_{t_0-1}(u)$.
  Hence, after turn $\tmin$ there exists no unclaimed edge incident in $u$
  and this implies that the potential of $u$ must have been set to $0$
  at some turn $s$ with $t_0\leq s\leq \tmin$.
  But then $t_1\leq s\leq \tmin=t_3$, a contradiction.
\end{proof}
\begin{proof}[Proof of Theorem~\ref{thm:smallpot}]
  Let $s$ be some turn with $\Pot_t<2n$ for all $t<s$.
  We show that this already implies $\Pot_s<2n$.\par
  If $\Pot_s< n$, there is nothing to show, so let $\Pot_s>n$.
  Let $t_0$ be maximal satisfying $t_0\leq s$ and $\Pot_{t_0-1}\leq n$
  ($t_0$ exists due to Lemma~\ref{lem:start_potential}).
  Define $\tmin$ as in Section~\ref{sec:increase}.
  If $s=\tmin$, we can apply Theorem~\ref{thm:no_increase} and get
  $\Pot_s=\Pot_\tmin\leq\Pot_{t_0-1}\leq n$,
  so we may assume $s<\tmin$. But then, $s<t_2$,
  so we can apply Observation~\ref{obs:auxobs} and obtain that $\Pot_s<2n$.
\end{proof}
\section{Open Questions}
We have narrowed the gap for the threshold bias to $[1.414\sqrt{n},1.633\sqrt{n}]$.
Of course, the question about the exact threshold value remains.
At first sight our strategy still has some unused potential
for improvement, since the secondary goal of preventing \mak\
from building a $\q/2$-star is very restricting.
\bre\ could allow \mak\ to build a few bigger stars,
if at the same time he is able to claim all edges connecting these stars.
For $\q\leq \sqrt{8n/3}$ the strategy still could be used
to prevent \mak\ from building an $\alpha\q$-star for some $\alpha>1/2$.
But it certainly needs some additional variations of the strategy
to prevent \mak\ from connecting stars of size at least $\q/2$.
\section{Acknowledgements}
We thank Joel Spencer for turning our attention to positional games and Ma\l gozata Bednarska-Bzd\c{e}ga for introducing us to the triangle game and
for her valuable comments and suggestions. We would also like to thank Jan Schiemann for
careful proof-reading and his useful comments.
\bibliography{literature}

\section{Appendix}
\subsection{List of variables}
\begin{itemize}
\item $n:$ number of nodes in the game graph
\item $\q:$ number of \bre-edges per turn 
\item $\beta:$ defined as $\beta:=\frac{\q^2}{n}$;
  the strategy in this paper works for 
  $\beta>\frac{8}{3}$.
\item $\firsteps:$ a strictly positive constant. 
\item $\deg_{M,t}(v):$ \emph{\mak-degree of} $v$;
  number of \mak-edges incident in $v$ after turn $t$
\item $\deg_{B,t}(v):$ \emph{\bre-degree of} $v$;
  number of \bre-edges incident in $v$ after turn $t$
\item $\epsd:$ a constant with $0<\epsd<1-\frac{8}{3\beta}$;
  chosen in Section~\ref{sec:potfunc}
\item $\bal(v):$ the \emph{balance} of $v$;
  a measure of the ratio of \mak\ and \bre-edges incident in $v$;
  introduced in Definition~\ref{def:pot}  
\item $p_0:$ the balance of a node without incident \mak\ or \bre-edges;
  introduced in Definition~\ref{def:pot} 
\item $\baldeg(v):$ the \emph{balanced \bre-degree} of $v$;
  introduced in Definition~\ref{def:pot2} 
\item $\defi(v):$  the \emph{deficit} of $v$, exponent in the potential function;
  introduced in Definition~\ref{def:pot2} 
\item $\mu:$ base in the potential function;
  introduced in Definition~\ref{def:pot2} 
\item $\pot(v):$ the \emph{potential} of $v$,
  in part~2 of the strategy \bre\ always claims edges
  $\{u,v\}$ maximizing $\pot(u)+\pot(v)$;
  introduced in Definition~\ref{def:pot2} 
\item $\Pot_t:$ the \emph{total potential} of a turn $t$;
  introduced in Definition~\ref{def:pot2}
\item $\free(t):$ number of \emph{free edges} claimed by \bre\ in turn $t$;
  introduced in Section~\ref{sec:strategy}
\item $\inc{t}(v):$ potential increase of $v$ in turn $t$
\item $\dec{t}(v):$ potential decrease of $v$ in turn $t$
\item $\decfree{t}(v):$ potential decrease of $v$ in turn $t$
  caused by free edges 
\item $\decheads{t}(v):$ potential decrease of $v$ in turn $t$
  caused by closing edges with $v$ as head 
\item $\dectails{t}(v):$ potential decrease of $v$ in turn $t$
  caused by closing edges with $v$ as tail 
\item $\deczero{t}(v):$ potential decrease of $v$ in turn $t$
  caused by claiming the last unclaimed edge of $v$
\item $\utt{t}(v)=\min\{\inc{t}(v),\decheads{t}(v)\}$;
  it holds $\utt{t}(v)+\ott{t}(v)=\decheads{t}(v)$
\item $\ott{v}(v)=\max\{\decheads{t}(v)-\inc{t}(v),0\}$;
  it holds $\utt{t}(v)+\ott{t}(v)=\decheads{t}(v)$
\item $\eta:$ a constant with $0<\eta<1-\mu p_0$;
  introduced in Section~\ref{sec:crit_turns}
\item $\diff{t}:$ main part of the total potential change in turn $t$
  with \mbox{$\diff{t}+\diffrest{t}=\Pot_t-\Pot_{t-1}$};
  introduced in Definition~\ref{def:crit}
\item $\diffrest{t}:$  rest  part of the total potential change in turn $t$,
  with $\diff{t}+\diffrest{t}=\Pot_t-\Pot_{t-1}$;
  introduced in Definition~\ref{def:crit}
\item $\epsa:$ a strictly positive constant;
  introduced in Section~\ref{sec:increase} 
\item $\epsb:$ a strictly positive constant;
  introduced in Section~\ref{sec:increase}
\item $\numcrit:$ a strictly positive value bounded by a constant;
  introduced in Section~\ref{sec:increase}
\item $t_i,i=0,1,2,3:$ certain turns
  considered in Section~\ref{sec:increase}
\item $\tmin=\min(t_1,t_2,t_3)$; introduced in Section~\ref{sec:increase}
\end{itemize}

\subsection{Interpretation of the balance value}
\label{sec:interpretation}
In the following we motivate the definition of the balance value
of a node by giving an `in-game'-example.
Let $v\in V$ with $\deg_M(v)<\frac{\q(1-\epsd)}{2}$
and suppose that \mak\ decides to concentrate on the node $v$,
i.e., from this moment on he will claim all of his edges incident in $v$
as long as there are unclaimed edges incident in $v$.
Moreover suppose that \bre's aim,
besides closing all \mak-paths of length $2$,
is to keep $\deg_M(v)$ below $\frac{\q(1-\epsd)}{2}$.
To achieve this,
he must claim a certain amount of edges incident in $v$ himself.
Denote this amount by $B_v$.
Let $T$ denote the number of turns that \mak\ needs to raise
$\deg_M(v)$ above $\left\lceil\frac{\q(1-\epsd)}{2}\right\rceil$.
Then $B_{\textnormal{total}}:=Tb$ is the number of edges that \bre\
can claim before $\deg_M(v)\geq\left\lceil\frac{\q(1-\epsd)}{2}\right\rceil$.
But there is a certain number $C$ of edges that \bre\ has to claim
at different places, not incident in $v$, to close new \mak-paths.\par
Setting $A:=B_{\textnormal{total}}-C$ as the amount of available \bre-edges,
the term $\frac{B_v}{A}$ represents the fraction of \emph{available} \bre-edges
necessary to prevent \mak\ from building a $\q/2$-star.
We will show that $\bal(v)$ is an approximation of $\frac{B_v}{A}$,
hence it is a measure for the `danger' of $v$:
The smaller $\frac{B_v}{A}$ is, the less attention \bre\ has to spend
to the node $v$. If $\frac{B_v}{A}>1$, this means
that \bre\ cannot achieve his goal of keeping $\deg_M(v)$ below $\q/2$.\par
For $f,g:\N\rightarrow \R$ we write $f\sim g$ if and only if
$\lim\limits_{n\rightarrow\infty}\frac{f(n)}{g(n)}=1$.
We will close this subsection by showing that $\bal(v)\sim\frac{B_v}{A'}$
for some $A'\leq A$.
To prevent \mak\ from building a $\frac{\q(1-\epsd)}{2}$-star at $v$,
at the end of the game \bre\ must possess at least $n-\frac{\q(1-\epsd)}{2}$
edges incident in $v$.
Hence, the number of edges still to claim is
$B_v=n-\frac{\q(1-\epsd)}{2}-\deg_B(v)\sim n-\deg_B(v)$.
Because \mak\ claims one edge per turn and concentrates on $v$,
we get $T=\frac{\q(1-\epsd)}{2}-\deg_M(v)$
and $B_{\textnormal{total}}=\frac{\q^2(1-\epsd)}{2}-\q\deg_M(v)$.
The exact value of $C$ depends on the choices of \mak\ and on how many
closing edges are already owned by \bre.
If we assume that all closing edges are previously unclaimed,
we can upper bound $C$ by 
\begin{align*}C'&:=\sum\limits_{i=\deg_M(v)}^{\lceil \q(1-\epsd)/2\rceil -1}i\\
                &=\frac{(\lceil \q(1-\epsd)/2\rceil-1)\cdot\lceil \q(1-\epsd)/2\rceil}{2}
                  -\frac{(\deg_M(v)+1)\deg_M(v)}{2}\\
                &\sim \frac{\q^2(1-\epsd)^2}{8}-\frac{\deg_M(v)^2}{2}. 
\end{align*}
Finally, for $A':=B_{\textnormal{total}}-C'\leq A$ we get
\begin{align*}\frac{B_v}{A'}
  &=\frac{B_v}{B_{\textnormal{total}}-C'}\\
  &\sim \frac{n-\deg_B(v)}{\frac{\q^2(1-\epsd)}{2}-b\deg_M(v)
    -\left(\frac{\q^2(1-\epsd)^2}{8}-\frac{\deg_M(v)^2}{2}\right)}\\
  &=\frac{8(n-\deg_B(v))}
    {\q^2(1-\epsd)(3+\epsd)-4\deg_M(v)(2\q-\deg_M(v))}
    =\bal(v).
\end{align*}
\end{document}